\newtheorem{theorem}{Theorem}[section]
\newtheorem{lemma}[theorem]{Lemma}
\newtheorem{proposition}[theorem]{Proposition}
\newtheorem{corollary}[theorem]{Corollary}
\theoremstyle{definition}
\newtheorem{notation}[theorem]{Notation}
\newtheorem{definition}[theorem]{Definition}
\newtheorem{example}[theorem]{Example}
\newtheorem{remark}[theorem]{Remark}
\newtheorem{construction}[theorem]{Construction}
\newtheorem{algorithm}[theorem]{Algorithm}
\theoremstyle{remark}
\def\TT{\mathbb{T}}
\def\ZZ{\mathbb{Z}}
\def\PP{\mathbb{P}}
\def\QQ{\mathbb{Q}}
\def\OO{\mathcal O}
\def\CC{\mathcal C}
\def\<{\langle}
\def\>{\rangle}
\def\good{/\!\!/}
\def\cox{\mathcal{R}}
\newcommand{\dual}{\vee}
\newcommand{\mov}{{\rm Mov}}
\newcommand{\BD}{\operatorname{BD}}
\newcommand{\thickhline}{%
    \noalign {\ifnum 0=`}\fi \hrule height 1pt
    \futurelet \reserved@a \@xhline
}
\renewcommand{\phi}{\varphi}
\renewcommand{\theta}{\vartheta}
\renewcommand{\rho}{\varrho}
\newcommand{\myhline}{\noalign{\global\arrayrulewidth.95pt}\hline
                      \noalign{\global\arrayrulewidth.2pt}}
\def\Trop{{\rm Trop}}
\def\b#1{\overline{#1}}
\def\CC{{\mathbb C}}
\def\KK{\CC}
\def\TT{{\mathbb T}}
\def\ZZ{{\mathbb Z}}
\def\QQ{{\mathbb Q}}
\def\PP{{\mathbb P}}
\def\XX{{\mathbb X}}
\def\Cox{\cox}
\def\id{{\rm id}}
\def\Mov{{\rm Mov}}
\def\Cl{\operatorname{Cl}}
\def\GL{{\rm GL}}
\def\codim{{\rm codim}}
\def\Spec{{\rm Spec}}
\def\lin{{\rm lin}}
\def\Lin{\lin}
\def\SL{{\rm SL}}
\def\spec{\Spec}
\def\QQQ{\QQ_{\geq 0}}
\def\KT#1{\KK[T_1,\ldots,T_{#1}]}
\newcommand{\vast}{\bBigg@{4}}
\newcommand{\Vast}{\bBigg@{5}}
  \newcommand{\miniscule}{\@setfontsize\miniscule{4}{5}}
\definecolor{zielony}{rgb}{0.3, 0.75, 0.2}
\definecolor{czerwony}{rgb}{0.8, 0.2, 0.1}
\definecolor{niebieski}{rgb}{0.3, 0.1, 0.9}
\begin{document}

\title[Computing resolutions of quotient singularities]{Computing resolutions of quotient singularities}
\author[M.~Donten-Bury]{Maria~Donten-Bury}
\address{University of Warsaw, Institute of Mathematics, Banacha 2, 02-097 Warszawa, Poland \& Freie Universit\"at Berlin, Mathematisches Institut, Arnimallee 3, 14195 Berlin, Germany}
\email{m.donten@mimuw.edu.pl}
\author[S.~Keicher]{Simon~Keicher}
\address{
Departamento de Matematica, 
Facultad de Ciencias Fisicas y Matematicas,
Universidad de Concepci{\'o}n, Casilla 160-C, Concepci{\'o}n, Chile
}
\email{keicher@mail.mathematik.uni-tuebingen.de}
\thanks{This work was completed while the first author held a Dahlem Research School Postdoctoral Fellowship at Freie Universit\"at Berlin.
The first author was partially supported by a Polish National Science Center project 2013/11/D/ST1/02580 and the second author was supported by Proyecto FONDECYT Postdoctorado N.~3160016.}

\subjclass[2010]{14E15, 14Q10, 14Q15, 14C20, 14L24}

\begin{abstract}
Let $G\subseteq\GL(n)$ be a finite group without pseudo-reflections. 
We present an algorithm to compute and verify a candidate for the Cox ring of a resolution $X\to \KK^n/G$, which is based just on the geometry of the singularity $\KK^n/G$, without further knowledge of its resolutions.
We explain the use of our implementation of the algorithms in  \texttt{Singular}.
As an application, we determine the Cox rings of resolutions $X\to \KK^3/G$ for all $G\subseteq \GL(3)$ with the aforementioned property and of order~$|G|\leq 12$. We also provide examples in dimension~$4$.
\end{abstract}
\maketitle

\section{Introduction}

We consider quotient spaces $\KK^n/G = \spec\: \KK [x_1,\ldots,x_n]^G$, where $G$ is a finite group acting linearly on~$\KK^n$. By Noether's theorem the ring of invariants of~$G$ is finitely generated, hence such quotients are (complex) algebraic varieties. They are usually singular and in such a case are called \emph{quotient singularities}. In their construction, geometry meets finite group theory and there have been many attempts of extending this relation to resolutions of $\KK^n/G$. An example, probably the most important one, of describing the geometric structure of crepant (which in this case means that the canonical divisor is linearly trivial) resolutions of $\KK^n/G$ in terms of algebraic properties of the group~$G$ is the McKay correspondence; for an introduction see, e.g.,~\cite{ReidBourbaki}. Though proved in several cases (see section~\ref{subsect:properties-resolutions}), in general it reveals how much there is still to learn about this class of singularities and their resolutions.

In this paper, we study quotient singularities $X_0 := \KK^n/G$ and their resolutions in terms of the {\em Cox ring}
$$
\Cox\left(X_0\right)
\ =\ 
\bigoplus_{\Cl(X)} \Gamma\left(X_0,\OO(D)\right),
$$
see~\cite{ArDeHaLa} for details on the construction. 
Cox rings have already been successfully used by various authors to study resolutions of quotient singularities, in particular for symplectic quotients in~\cite{81resolutions,DobuMa}, described by a generating set in a simpler ring, and via an algorithmic approach based on toric ambient modifications in~\cite{HaKe}.
In this article we generalize the methods used in~\cite{HaKe}.
While finishing a draft of this paper, we also found out about Yamagishi's work~\cite{Ya}, which extends~\cite{81resolutions, DobuMa}.
However, our approach is different from~\cite{Ya} and our methods are not restricted to the class of crepant resolutions. In particular, as explained below, we do not try to construct the generators of the Cox ring directly from the group structure data, but we obtain it via toric ambient modifications, chosen in an intermediate step based on tropical geometry.

Our first contribution, presented in Section~\ref{sec:algos}, is an an algorithm to compute and verify a candidate for the Cox ring of a resolution $X\to X_0$ without requiring further knowledge of $X_0$.
 More precisely, Algorithm~\ref{algo:coxquotsing} computes $\Cox(X_0)$
and  Algorithm~\ref{algo:resolve} then computes a candidate for the Cox ring of a resolution $X\to X_0$ and verifies the result.
Here, the main tool are toric ambient modifications as in~\cite{Ha2, HaKeLa}:
we embed $X_0$ into an affine toric variety $Z_0$ and compute a resolution $Z\to Z_0$. The proper transform $X\to X_0$
then is the desired candidate for a resolution which we can verify algorithmically.
Note that the choice of the toric resolution involves a tropical step; this is in the spirit of Tevelev and Teissier~\cite{Tev,Tev2}.
Algorithm~\ref{algo:resolve} is a variant of the algorithm~\cite{Ke:diss} given for Mori dream spaces, which, in turn, is based on~\cite{ArDeHaLa,Hu:diss} where the algorithm has been shown to work in the setting of complete rational complexity one $T$-varieties. See also~\cite{HaWro} for the case of affine $\KK^*$-surfaces.
Our algorithms are implemented in a library for the Open Source computer algebra system \texttt{Singular}~\cite{singular}; we explain its use in Section~\ref{sec:implementation} by examples.

Our second contribution
concerns resolutions of three-dimensional quotient singularities;
it is presented in  Section~\ref{sec:quotsing}.
We first classify in Proposition~\ref{prop:reps} the finite, non-abelian groups $G\subseteq \GL(3)$ with $|G|\leq 12$ and
without pseudo-reflections (see Remark~\ref{rem:pseudo-reflections}). When~$G$ is abelian, the quotient singularity is toric; since the algorithm of constructing a (toric) resolution and the structure of the Cox ring is known in this case, we do not consider it.
Using our algorithms from Section~\ref{sec:algos}, we first present the Cox rings of all singularities~$\KK^3/G$ on the list, see Proposition~\ref{prop:coxX0} and then their resolutions, see Theorem~\ref{thm:dim3}.
Then we discuss certain properties of the obtained resolutions in order to understand what can be expected from the output of the algorithm in general. In particular, we check whether the resolutions for subgroups of $\SL(3)$ on the list are crepant.
In Subsection~\ref{subsection:smallres}, we then give a modified algorithm to produce smaller resolutions. We apply it to obtain two more crepant resolutions.

Finally, in Section~\ref{sec:4dimex}, we apply our methods to two four dimensional examples.
Note that in dimension $4$ less is known about resolutions of quotient singularities, hence computational experiments are even more valuable than in dimension~$3$.
We provide the Cox rings of $X_0=\KK^4/G$ for chosen representations of $G=S_3$ and $G=D_8$ and compute the Cox rings of modifications $X\to X_0$. 
We are able to retrieve crepant resolutions that were also found in~\cite{DobuMa}, see Proposition~\ref{prop:4dim1} and~\ref{prop:4dimS3}.

{\em Acknowledgements: } We would like to thank J\"urgen Hausen for several helpful discussions.
We further thank Yue Ren and Christof S\"oger respectively for answering question about Singular and Normaliz.
Lastly, we would like to thank the anonymous referee for his valuable comments and suggestions.

\section{Algorithmic resolution}
\label{sec:algos}

In this section, we describe the algorithm to compute the Cox ring of a resolution of a quotient singularity $X_0:=\KK^n/G$ where $G$ is a finite group. It is divided into two parts: the first one, Algorithm~\ref{algo:coxquotsing}, returns a presentation of the Cox ring of $X_0$ and the second one, Algorithm~\ref{algo:resolve}, computes the Cox ring of a candidate for a resolution $X\to X_0$ and then verifies the choice of $X$. The latter algorithm is a variant of~\cite[Thm 3.4.4.9]{ArDeHaLa} and \cite[Alg.~2.4.8]{Ke:diss}. It produces such a candidate $X$ using a tropical step.
 
 \subsection{The setting and Cox rings}
We work with representations of~$G$, usually assuming that they are faithful. In this case we will often identify the representation with its image in $\GL(n)$, i.e., consider~$G$ as a matrix group.

\begin{definition}
 A matrix group $G\subseteq \GL(n)$ is \emph{small} if it does not have \emph{pseudo-reflections};
these are $A\in G\subseteq \GL(n)$ of finite order
such that the subspace of fixed points $(\KK^n)^A$ is a hyperplane.
We also say that a representation of an abstract group $G$ is small, if its image in $\GL(n)$ is small.
\end{definition}

\begin{remark}
\label{rem:pseudo-reflections}
Note that the assumption on the representation of $G$ not having 
pseudo-reflections is not restrictive:
let $H\subseteq G$ be the normal subgroup generated by all pseudo-reflections. 
Then
$$
\KK^n/G\ \ \cong\ \ (\KK^n/H) \bigl/ (G/H) \ \ \cong\ \ \KK^n\bigl/(G/H)
$$
since $\KK^n/H$ is smooth by the Chevalley-Shephard-Todd theorem.
In particular, the singularity for $G$ is the same as the one for the smaller group~$G/H$.
\end{remark}

As outlined in the introduction, 
our central tool for the study of both $X_0$ and its resolutions is the Cox ring.
In this paper, we are particularly interested in the case
where the Cox ring and the class group are finitely generated; one then 
calls the underlying variety a {\em Mori dream space}.
Given such a Mori dream space $X$, the Cox ring  $\Cox(X)$ together with a choice 
of a collection of overlapping polyhedral cones in $\Cl(X)\otimes_\ZZ \QQ$ determines $X$ up to isomorphism.
If $X$ is a surface, $\Cox(X)$ even fully encodes~$X$.
The following construction summarizes this situation and shows how $X$ can be retrieved as a quotient of an open subset of $\Spec(\Cox(X))$.
We refer to the book~\cite[Sect.~3]{ArDeHaLa} for details.

To this end, recall that a variety~$X$
is an {\em $A_2$-variety} if each two points admit a common affine open neighborhood. Examples include the class of quasiprojective varieties.
We call $X$ an {\em $A_2$-maximal} variety if $X$ is an $A_2$-variety 
and there is no open embedding $X\subsetneq Y$
into an $A_2$-variety $Y$ and $\codim_Y(Y\setminus X)\geq 2$

\begin{construction}
\label{con:bunchedring}
(See~\cite[Sect.~3I]{ArDeHaLa} and~\cite{Ha2}.)
Let $X$ be a normal, irreducible $A_2$-maximal variety with $\Gamma(X,\OO^*)=\KK^*$ such that both $K:=\Cl(X)$ and $R:=\Cox(X)$ are finitely generated. 
Then $X$ is the good quotient of an open subset~$\widehat X$ of~$\Spec(R)$ 
$$
\xymatrix{
X
&&
\ar[ll]^(.7){\good H}
\widehat X\ \subseteq\ \b X\ :=\ \Spec(R)\ \subseteq\ \KK^r
}
$$ 
by the action of the characteristic quasitorus 
$H:=\Spec(\KK[K])$, and  $\widehat X$ can be described combinatorially by a set $\Phi$ of pairwise overlapping polyhedral cones in $K\otimes_\ZZ\QQ$.
The triple $(R,\mathfrak F, \Phi)$, where $\mathfrak F$ is a generating set of $R$, is called a {\em bunched ring} and already determines $X$ up to isomorphism.
\end{construction}

Moreover, each variety~$X$ as in Construction~\ref{con:bunchedring}
comes with an embedding
into the so-called {\em canonical  toric ambient variety};
this is a toric variety $Z\supseteq X$ with a fan $\Sigma \subset N\otimes_\ZZ \QQ$ 
such that the embedded variety $X$ inherits several nice properties from $Z$; we refer to~\cite[Subsect.~3.2.5]{ArDeHaLa} for details as well as its explicit computation from the Cox ring of~$X$.

\subsection{Cox ring of the quotient singularity}
We start from explaining how to compute a presentation $\Cox(X_0)=\KT{s}/I_0$ of the Cox ring of $X_0 = \KK^n/G$, which is suitable for being an input data for computing the Cox ring of a resolution of~$X_0$. The algorithm generalizes~\cite[Prop.~3.1]{HaKe}.
First note that, by~\cite[Thm~3.1]{ArGa}, $\Cox(X_0)$ is isomorphic to the ring of invariants $\KK[S_1,\ldots,S_n]^{[G,G]}$
that comes with a grading by the (finite) group 
$\Cl(X_0)= \XX(G')$ of characters of the abelianization $G':=G/[G,G]$. 
On the geometric side, the situation becomes (cf.~\cite[Prop.~3.1]{HaKe}):
\[
 \xymatrix{
 \CC^n/[G,G] 
 \ar@{}[rr]|\subseteq
 \ar[d]^{/G'}
 &&
 \CC^s
 \ar[d]_{/G'}
 \\
 X_0
\ar@{}[rr]|\subseteq
  &&
 \CC^s/G'
 }
\]

To obtain a presentation suitable for further steps, we need to find a set of $G'$-homogeneous generators $g_1,\ldots, g_s$ of $\KK[S_1,\ldots,S_n]^{[G,G]}$. Thus we first compute any minimal set of generators 
and then turn it into a set of $G'$-homogeneous ones.
This is done separately in the sets of polynomials of a fixed (standard) degree as follows.

\begin{construction}[$G'$-homogeneous generators for {$\KK[S_1,\ldots,S_n]^{[G,G]}$}]
\label{con:GGhomog}
Consider generators $\mathcal F = \{f_1,\ldots,f_s\}$ 
for $\CC[S_1,...,S_n]^{[G,G]}$.
 Let $d\in\ZZ_{\geq 0}$ be such that there is $f_i \in \mathcal F$ of degree~$d$. Let $V_d$ denote the vector space of all polynomials of degree~$d$ in $\KK[S_1,\ldots,S_n]^{[G,G]}$. Denote by $h_{d,1},\ldots,h_{d,s_d}$ a basis of~$V_d$ chosen such that its elements either belong to~$\mathcal F$ or are products of elements of~$\mathcal F$ of smaller degree. 

Observe that we have an induced action of $G'$ on~$V_d$.
Thus, there is a homomorphism (often, but not always, an injection)
$$
G'\ \to\ \GL(s_d),\qquad
G' \ni [m] \ \mapsto\ M_{d,m},
$$
where $M_{d,m}\in\GL(s_d)$ of the automorphism of $V_d$ given by $m\in G\subseteq \GL(n)$ in basis $h_{d,1},\ldots,h_{d,s_d}$.
We describe the image of $G'$ in $\GL(s_d)$ by computing a generating set
$\<M_{d,1},\ldots,M_{d,k}\>\subseteq \GL(s_d)$, which is an image of a generating set of~$G$.
Moreover, since $G'$ is abelian, we can find a basis $\overline{h}_{d,1},\ldots,\overline{h}_{d,s_d}$ of $V_d$ in which all $M_{d,i}$ are simultaneously diagonalized.

Going through the possible values of $d\in \ZZ_{\geq 0}$,
we collect the bases $\overline{h}_{d,1},\ldots,\overline{h}_{d,s_d}$
and remove redundant elements; 
the result is a set of $G'$-homogeneous polynomials $g_1,\ldots,g_s$ which generate $\CC[S_1,\ldots,S_n]^{[G,G]}$.
We will write $\chi_{ij}$ for the eigenvalue of $g_i$ with respect to $M_{d_i,j}$, where $d_i = \deg(g_i)$.

\end{construction}

Let $g_1,\ldots,g_s$ and $\chi_{ij}$ be as in Construction~\ref{con:GGhomog}.
We then obtain the desired presentation $\Cox(X_0)=\KT{s}/I_0$ by computing the kernel $I_0\subseteq \KT{s}$ of the homomorphism
$$
\KT{s}\,\to\,\KK[S_1,\ldots,S_n]^{[G,G]},\qquad
T_i\,\mapsto\,g_i.
$$
This is a standard procedure, it requires eliminating variables $S_1,\ldots,S_n$ from the ideal generated by all $T_i-g_i$ in the enlarged ring $\KK[S_1,\ldots,S_n,T_1,\ldots,T_r]$, see for example~\cite[Thm.~3.2]{CoLiOSh} or~\cite[Lem.~2.1]{ker}.
Finally, to install the $\XX(G')=\Cl(X_0)$-grading on $\Cox(X_0)$,
it remains to define the degrees of the generators as $\deg(T_i)=(\chi_{i1},\ldots,\chi_{ik})\in {\mathbb X}(G')$;
note that we give only values on a generating set of $G'$, which determine the character uniquely.
The {\em degree map} of $\Cox(X_0)=\KT{s}/I_0$, i.e.,
the homomorphism 
$$
\ZZ^s\,\to\,\Cl(X_0),\qquad
e_i\,\mapsto\,\deg(T_i).
$$
is defined by the {\em degree matrix}, the matrix with columns~$\deg(T_1),\ldots,\deg(T_s)$.
We summarize this as follows.

\begin{algorithm}[Cox ring of $X_0$]
\label{algo:coxquotsing}
\emph{Input: }
a small group $G\subseteq \GL(n)$, given by a generating set.
 \begin{itemize}
  \item 
  Compute generators $f_1,\ldots,f_s$ for the invariant ring 
  $\KK[S_1,\ldots,S_n]^{[G,G]}$.
  \item 
  As in Construction~\ref{con:GGhomog}, modify $f_1,\ldots,f_s$ to a set $g_1,\ldots,g_s$ of $G'$-homo\-gen\-eous generators of $\KK[S_1,\ldots,S_n]^{[G,G]}$ by computing eigenvectors of the induced action of $G'$ on subspaces of polynomials of fixed degree~$d$, where $d\in \{\deg(f_1),\ldots,\deg(f_s)\}$.
  \item 
  In $\KK[S_1,\ldots,S_n,T_1,\ldots,T_s]$, define 
  the ideal $J=\<T_i-g_i;\, 1\leq i\leq s\>$
  and   compute its elimination ideal
  $I_0:=J\cap \KK[T_1,\ldots,T_s]$.
  \item
  Define the matrix $Q_0$ with the $i$-th column $(\chi_{i1},\ldots,\chi_{ik})$, where $\chi_{ij}$ are eigenvalues of $g_i$ with respect
  to~$M_{d_i,j}$, as in Construction~\ref{con:GGhomog}.
 \end{itemize}
\emph{Output: }
$I_0\subseteq \KT{s}$ and $Q_0$. Then $\KT{s}/I_0$ is a presentation of $\Cox(X_0)$, where $X_0=\KK^n/G$, in terms of generators and relations with $Q_0$ as degree matrix.
\end{algorithm}

\begin{example}
\label{ex:Q8v1}
We consider the faithful representation of the quaternion group $G:=Q_8$ with $8$ elements
whose image $G\subseteq\GL(2)$ is generated by
\begin{align*}
   \bigl(\begin{smallmatrix}i&0\\0&-i\end{smallmatrix}\bigr),\qquad 
   \bigl(\begin{smallmatrix}0&-i\\-i&0\end{smallmatrix}\bigr),
  \end{align*}
where $i\in \CC$ denotes the imaginary unit.
There are no pseudo-reflections since no matrix in~$G$ has $1$ as eigenvalue.
We now apply the steps of Algorithm~\ref{algo:coxquotsing}.  
Generators for the invariant ring $\KK[S_1,S_2]^{[G,G]}$,
where $[G,G]=\< -\id\>$, are
$$
f_1\ =\ S_1S_2,\qquad
f_2\ =\ S_1^2,\qquad
f_3\ =\ S_2^2.
$$
Moreover, the group $G'=G/[G,G]$ is isomorphic to $(\ZZ/2\ZZ)^2$
and acts on the linear hull $V_2:=\Lin_\KK(f_1,f_2,f_3)$;
we work with the description
$$
G'\ \cong\ 
\left\<
\left[
    \mbox{\tiny $
    \begin{array}{rrr}
    1 & 0 & 0 \\
    0 & -1 & 0 \\
    0 & 0 & -1
    \end{array}
    $}
    \right],
\left[
    \mbox{\tiny $
    \begin{array}{rrr}
    -1 & 0 & 0 \\
    0 & 0 & 1 \\
    0 & 1 & 0
    \end{array}
    $}
    \right]
\right\>
\ \subseteq\ \GL(3).
$$
The vector space $V_2$ then has the $G'$-invariant subspaces
$\Lin_\KK(f_1)$ and $\Lin_\KK(f_2,f_3)$.
We have a basis $(g_1,g_2,g_3)$ for $V_2$ consisting of the eigenvectors
\begin{gather*}
g_1\ :=\ f_1,\qquad
g_2\ :=\ f_2+f_3,\qquad
g_3\ :=\ -f_2+f_3\ \in\ V
\end{gather*}
where the weights $w_i := (\chi_{i1},\chi_{i2})\in (\ZZ/2\ZZ)^2\cong G'$ are 
$w_1=(\b 0,\b 1)$, $w_2=(\b 1,\b 0)$ and $w_3=(\b 1,\b 1)$.
The degree map is
$$
Q_0\colon \ZZ^3\to (\ZZ/2\ZZ)^2,\qquad
e_i\mapsto 
\left[
\mbox{\footnotesize $
\begin{array}{rrr}
\b 0 &\b  1 &\b  1\\
\b 1 &\b  0 &\b  1
\end{array}
$}
\right]
\cdot e_i
$$
In the ring $\KK[S_1,S_2,T_1,T_2,T_3]$, we consider the ideal
$J:=\<T_1 - g_1, T_2 - g_2, T_3-g_3\>$
  and compute its elimination ideal
  $I_0:=J\cap \KK[T_1,T_2,T_3]$.
  We arrive at the $G'$-graded Cox ring with degree matrix~$Q_0$:
  $$
  \Cox(\KK^2/G)
  \ \cong\ 
  \KK[T_1,T_2,T_3]/I_0,\qquad  
  I_0\, =\,
  \<4T_1^2 - T_2^2 + T_3^2\>.
  $$
\qed
\end{example}

\subsection{Cox ring of the resolution}
We now turn to the computation of the Cox ring of a resolution using the presentation $\Cox(X_0)=\KT{s}/I_0$ of the Cox ring 
 obtained using Algorithm~\ref{algo:coxquotsing}. There are two basic ingredients of the algorithm: the tropical variety $\Trop(X_0)$ constructed from the presentation of~$\Cox(X_0)$ and the algorithm for computing the Cox ring of a toric ambient modification,~\cite[Alg.~3.6]{HaKeLa}. The idea is to embed $X_0$ into its ambient affine toric variety $Z_0 = \KK^n/G'$ and to use the tropical variety $\Trop(X_0)$ to determine the centers for toric ambient modifications of $X_0$. Then we perform a toric resolution $Z\to Z_0$, consider the proper transform $X\to X_0$, compute its Cox ring and check whether it actually corresponds to a resolution of~$X_0$. Let us explain the details.

\smallskip
\textit{Step 1: the toric ambient variety.}
Recall that $Q_0$ is the degree matrix of $\Cox(X_0)$. We fix a {\em Gale dual} matrix for $Q_0$, i.e., a matrix $P_0$ such that its transpose $P_0^*$ fits into the exact sequence
\begin{align}
\xymatrix{
0
&
\ar[l]
\Cl(X_0)
&&
\ar[ll]_{
Q_0
}
\ZZ^s
&&
\ar[ll]_{P_0^*}
\ker(Q_0)
&
\ar[l]
0
}
\label{eq:seq}
\end{align}
see, e.g.,~\cite{Ke:diss} for its computation.
The defining fan $\Sigma_0$ of the canonical toric ambient variety $Z_0$
of $X_0$ (cf.~the paragraph after Construction~\ref{con:bunchedring}) can be written down explicitly:
$$
\Sigma_0\ =\ {\rm faces}\left(\sigma\right),\qquad
\sigma\ :=\ P_0(\QQQ^s)\ \subseteq\ \QQ^n.
$$
The corresponding (affine) toric ambient variety is $Z_0=\KK^n/G'$ with the abelianization $G':=G/[G,G]$.

\smallskip
\textit{Step 2: the tropical variety.}
The tropical variety~$\Trop(X_0)$ lies in the same linear space 
$\QQ^n$ as the fan~$\Sigma_0$.
Consider the affine variety $\b X_0 := \Spec(\Cox(X_0))\subseteq \KK^s$.

\begin{construction}\label{con:trop}
Given a polynomial $f\in \KT{s}$, its tropical variety $\Trop(f)\subseteq \QQ^s$ is the support of the codimension-one skeleton of the normal fan over the Newton polytope of~$f$.
In the previous setting, we then obtain tropical varieties 
\begin{gather*}
 \Trop\left(\b X_0\cap (\KK^*)^s\right)
 \ :=\, 
 \bigcap_{f\in I_0} \!\Trop(f)
 \ \subseteq\ \QQ^s,
\\
\Trop(X_0)\ :=\ P_0\left( \Trop\left(\b X_0\cap (\KK^*)^s\right)\right)
\ \subseteq\ \QQ^n.
\end{gather*}
It is possible to establish a fan structure on $\Trop(X_0)$, see, e.g.,~\cite{BoJeSpeStu, MacStu};  we will denote this fan by~$\Upsilon$. 
\end{construction}

Note that in Construction~\ref{con:trop}, despite the infinite intersection, $\Trop\left(\b X_0\cap (\KK^*)^s\right)$ can be computed using finitely many Gr\"obner basis computations, see~\cite{BoJeSpeStu}.

\smallskip
\textit{Step 3: determining the modification.}
From the previous steps, we obtain two polyhedral fans $\Sigma_0, \Upsilon$ in $\QQ^n$. We use the second one to determine a modification of the first one as follows; this is similar to a procedure proposed by Tevelev in a related context, see Remark~\ref{rem:tevelev}.
Firstly, we  compute the fan
$$
\Sigma'\ :=\ \left\{\sigma\cap \tau \:|\: \tau \in \Upsilon\right\}
$$
with support $|\Sigma'| = \Trop(X_0)$. 
Note that, by construction and~\cite[Lem.~2.2]{Tev}, the toric variety $Z'$ corresponding to $\Sigma'$ is still a toric ambient variety of $X_0$. Secondly, we resolve singular cones of $\Sigma'$ by inserting new rays, compare~\cite[11.1]{toricvarieties} for the procedure. This yields a regular fan $\Sigma$ and a toric resolution $Z\to Z'$, where $Z$ is the toric variety corresponding to $\Sigma$.
The following diagram summarizes the situation; 
the map $\pi$ is the composition of the toric morphism defined by the map of fans $\Sigma'\to \Sigma_0$ and the resolution, and $X$ is the proper transform of $X_0$ under~$\pi$ (see also the next step).

\[
\xymatrix{
X 
\ar[d]
\ar@/_2pc/[dd]_\pi
\ar@{}[r]|\subseteq
&
Z
\ar@/^2pc/[dd]^\pi
\ar[d]_{\text{resolve}}
\\
X_0
\ar@{=}[d]
\ar@{}[r]|\subseteq
&
Z'
\ar[d]_{\cap \Upsilon}
\\
X_0
\ar@{}[r]|\subseteq
&
Z_0
}
\]

For the next step, we write the primitive generators $v_1,\ldots,v_m$ 
of the rays added to $\Sigma_0$ in this process
into the columns of the enlarged $(s+m)\times n$-matrix $P := [P_0,v_1,\ldots,v_m]$,
and we compute a Gale dual matrix $Q$ of~$P$.

\smallskip
\textit{Step 4: modifying the ideal of $\Spec(\Cox(X_0))$.}
Finally, we apply~\cite[Alg.~3.6]{HaKeLa}  to compute the Cox ring of the proper transform $X\to X_0$ under~$\pi$;
its main idea is as follows.
Extending the diagram from the previous step 
to the spectra $\overline Y := \spec(\Cox(Y))$ 
of the respective Cox rings, we obtain 
\[
\xymatrix{
\overline X
\ar@/^1pc/[rrr]^{\subseteq}
\ar@{-->}[r]
\ar[d]^{\overline \pi}
&
X 
\ar[d]^\pi
\ar@{}[r]|\subseteq
&
Z
\ar[d]^\pi
&
\KK^{s+m}
\ar[d]^{\overline \pi}
\ar@{-->}[l]
\\
\overline X_0
\ar@{-->}[r]
\ar@/_1pc/[rrr]_{\subseteq}
&
X_0
\ar@{}[r]|\subseteq
&
Z_0
&
\ar@{-->}[l]
\KK^s
}
\]
where a dashed arrow from some $\overline Y$ to the corresponding~$Y$ means that~$Y$
is the quotient of an open subset of $\overline Y$ by a quasitorus as in Construction~\ref{con:bunchedring}.
The spectrum of the Cox ring of the proper transform is the closure
$$
\overline X\ =\ 
\overline{
{\overline \pi}^{-1}(\overline X_0\cap (\KK^*)^s)
}
\ \subseteq\ \KK^{s+m}.
$$
This gives a recipe to compute $\Cox(X)=\OO(\overline X)$:
using the presentation $\Cox(X_0)=\KT{s}/I_0$, compute the pullback $\overline \pi^{*}(I_0)\subseteq \KK[T_1^{\pm 1},\ldots,T_{s+m}^{\pm 1}]$ and then the saturation 
$I:=\overline \pi^{*}(I_0):(T_1\cdots T_{s+m})^\infty$ in the ring $\KK[T_1,\ldots,T_{s+m}]$.
By~\cite[Alg.~3.6]{HaKeLa},  if all variables $T_i$ define prime elements in $R:=\KT{s+m}/I$
and $\dim(I)-\dim(\<T_i,T_j\>+I)\geq 2$ for all $i\ne j$, then $R$ is the Cox ring of~$X$.

It then remains to check whether $X$ is smooth -- if it is, then $X$ is a resolution of~$X_0$, see the proof below. We also provide an algorithm for testing smoothness in the current setting, see Algorithm~\ref{algo:issmooth}.

\medskip

The following algorithm summarizes the process of computing and verifying a candidate for the Cox ring $\Cox(X)$ of a resolution $X\to X_0$. Note that it is close to~\cite[Alg.~2.4.8]{Ke:diss} which in turn is based on~\cite[Thm~3.4.4.9]{ArDeHaLa} where the complete, complexity-one-case is treated. See also~\cite[Ch.~3]{Hu:diss} and check~\cite[Sect.~3]{HaWro} for the case of affine $\KK^*$-surfaces.

\begin{algorithm}[Candidate for the Cox ring of a resolution $X\to X_0=\KK^n/G$]
\label{algo:resolve}
\emph{Input: }
a finite small group $G\subseteq \GL(n)$.
 \begin{itemize}
  \item 
  Compute the Cox ring $\Cox(X_0) = \KT{s}/I_0$ of $X_0$ and the matrix $Q_0$ with Algorithm~\ref{algo:coxquotsing}.
  \item   
  Determine a matrix $P_0$ that is Gale dual to $Q_0$.
  \item 
  Define the fan $\Sigma_0:={\rm faces}(\sigma)$ with the cone $\sigma := P_0(\QQQ^n)\subseteq \QQ^n$.
  \item
  Compute a fan $\Upsilon$ with support  $\Trop(X_0)$
  to obtain $\Sigma':=\{\sigma\cap \tau \:|\:\tau \in \Upsilon\}$.
  \item 
  Stellarly subdivide $\Sigma'$ at primitive vectors $v_1,\ldots,v_m\in \ZZ^n$ until we obtain a regular fan $\Sigma$.
  \item
  Compute a Gale dual $Q$ of the enlarged matrix~$P := [P_0,v_1,\ldots,v_m]$.
  \item 
  Compute a presentation $R:=\KT{r}/I$, where $r:=s+m$, of the Cox ring of the proper transform $X\to X_0$ under~$\pi$ using~\cite[Alg.~3.6]{HaKeLa} with the \texttt{'verify'}-option, as explained above.
  \item
   Verify smoothness of the variety $X$ determined by  $(R,\Sigma)$, e.g. with Algorithm~\ref{algo:issmooth}.
 \end{itemize}
\emph{Output: }
$R$ and $Q$. If the verifications of the last two steps were successful, then $R$ 
is the Cox ring of a resolution $X\to X_0$
and $Q$ is the degree matrix of~$R$.
\end{algorithm}

\begin{lemma}\label{lem:normal}
In the setting of Algorithm~\ref{algo:resolve},
if $\spec(R)\cap (\CC^*)^r$ is smooth and all variables $T_1,\ldots,T_r$ are prime, then $R$ is normal.
\end{lemma}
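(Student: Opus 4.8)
The plan is to verify normality through Serre's criterion: a Noetherian ring is normal precisely when it satisfies the conditions $(R_1)$ and $(S_2)$. First I would record that $R=\KT{r}/I$ is an integral domain. In the setting of Algorithm~\ref{algo:resolve} it is the coordinate ring of $\overline X=\overline{\overline\pi^{-1}(\overline X_0\cap(\CC^*)^s)}$, the closure of an irreducible set; since $\Cox(X_0)$ is a domain (a subring of $\KK[S_1,\ldots,S_n]$) the torus part is irreducible, hence so is its preimage and closure, so $I$ is prime. This is in any case presupposed by the hypothesis that the $T_i$ are prime \emph{elements}. Write $K$ for the fraction field and $\overline R$ for the integral closure of $R$ in $K$; the goal is $\overline R=R$.

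The condition $(R_1)$ follows quickly from the two hypotheses. Since $\Spec(R)\cap(\CC^*)^r$ is smooth, hence regular, the singular locus of $\Spec(R)$ is contained in $V(T_1\cdots T_r)=\bigcup_i V(T_i)$. I would then examine the height-one primes $\mathfrak p$ of $R$. If $\mathfrak p$ contains no $T_i$, then $R_{\mathfrak p}$ is a localization of the regular ring $R_{T_1\cdots T_r}$ and is therefore regular. If $\mathfrak p$ contains some $T_i$, then since $T_i$ is prime the ideal $(T_i)$ is itself prime of height one, so $\mathfrak p=(T_i)$; the maximal ideal of the one-dimensional Noetherian local domain $R_{(T_i)}$ is then generated by the single element $T_i$, so $R_{(T_i)}$ is a DVR. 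Thus every localization at a height-one prime is regular and $(R_1)$ holds.

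The heart of the matter, and the step I expect to be the main obstacle, is $(S_2)$. It is worth seeing \emph{why} $(R_1)$ does not suffice. As $R_{T_1\cdots T_r}$ is smooth and hence normal, we have $R\subseteq\overline R\subseteq R_{T_1\cdots T_r}$, so any $x\in\overline R$ may be written $x=g/(T_1\cdots T_r)^N$ with $g\in R$. Because $x$ also lies in each DVR $R_{(T_i)}$, the valuation $v_i$ of the prime divisor $(T_i)$ satisfies $v_i(x)\geq 0$; since $v_i(T_j)=\delta_{ij}$ for distinct height-one primes, this forces $v_i(g)\geq N$ for all $i$. Concluding $\overline R=R$ amounts to passing from these inequalities to genuine divisibility $g\in (T_1\cdots T_r)^N R$, i.e.\ to the statement that the ideals $(T_i^N)$ are unmixed, equivalently that symbolic and ordinary powers of $(T_i)$ coincide. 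This does not follow from $(R_1)$ alone: as $\overline R$ and $R$ agree after inverting $T_1\cdots T_r$ while, by $(R_1)$, the non-normal locus has codimension $\geq 2$, the module $\overline R/R$ is \emph{a priori} supported in codimension $\geq 2$ inside $\bigcup_i V(T_i)$, and it is exactly $(S_2)$ that rules out such contributions.

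To supply $(S_2)$ I would import the extra structure of the construction. The ring $\Cox(X_0)=\KK[S_1,\ldots,S_n]^{[G,G]}$ is an invariant ring of a finite group in characteristic zero, hence Cohen--Macaulay by the Hochster--Eagon theorem, so $\overline X_0$ is Cohen--Macaulay and in particular satisfies $(S_2)$. The subsequent pullback along the toric map $\overline\pi$ together with the saturation $I=\overline\pi^{*}(I_0):(T_1\cdots T_r)^\infty$ is designed so that the resulting $R$ inherits this depth condition; transporting the Cohen--Macaulay (or at least $S_2$) property through this flat pullback-and-saturate step is the routine but technical point to be checked with care. Granting it, $R$ satisfies $(R_1)$ and $(S_2)$, and Serre's criterion yields that $R$ is normal. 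As an alternative to the global depth argument one may instead verify directly that each $(T_i^N)$ is $(T_i)$-primary in $R$, which by the computation above is exactly what the divisibility step requires and which again expresses $(S_2)$ along the prime divisors $V(T_i)$.
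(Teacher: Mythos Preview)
Your $(R_1)$ verification is fine, but the $(S_2)$ step is a genuine gap. You yourself flag it as ``the routine but technical point to be checked with care'' and then grant it; however it is neither routine nor clear. The map $\overline\pi$ is a monomial map between affine spaces, not a flat morphism between the total coordinate spaces in question, and saturation at $T_1\cdots T_r$ does not in general preserve $(S_2)$ or Cohen--Macaulayness. Your fallback, showing that $(T_i^N)$ is $(T_i)$-primary, is exactly the $(S_2)$ assertion along $V(T_i)$ rephrased, so it does not break the circle.

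The point you are missing is that the two stated hypotheses already imply normality, with no appeal to the Cohen--Macaulay property of $\Cox(X_0)$. The paper argues by an elementary iteration using the following fact (cited from \cite[Lem.~IV.1.2.7]{ArDeHaLa}): if $R$ is a Noetherian domain, $f\in R$ is a prime element, and the localization $R_f$ is normal, then $R$ is normal. Indeed, any $x$ integral over $R$ lies in $R_f$; writing $x=c/f^n$ with $n$ minimal and clearing denominators in an integral equation gives $f\mid c^m$, hence $f\mid c$ by primality, contradicting minimality unless $n=0$. With this in hand one sets $\overline X_i:=\Spec\bigl(R_{T_1\cdots T_i}\bigr)$, starts from the smooth (hence normal) $\overline X_r=\Spec(R)\cap(\CC^*)^r$, and peels off one variable at a time: $T_i$ remains prime in $R_{T_1\cdots T_{i-1}}$, and inverting it gives the already-normal $\overline X_i$, so $\overline X_{i-1}$ is normal. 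After $r$ steps one reaches $\overline X_0=\Spec(R)$. This bypasses Serre's criterion entirely; in particular no separate $(S_2)$ input is needed.
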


\begin{proof}
Write 
$\b X_i$ for the localization of
$\b X := \spec(R)$
at the product $T_1\cdots T_i$.
By assumption, $\widehat X := \spec(R)\cap (\CC^*)^r$ is smooth
and therefore also $\overline X_r$ is smooth
and in particular normal.
Since $T_r$ is prime, this implies normality of $\b X_{r-1}$, see~\cite[Lem.~IV.1.2.7]{ArDeHaLa}.
Iterating this argument, $\b X_0 = \spec(R)$ is normal.
\end{proof}

\begin{proof}[Proof of Algorithm~\ref{algo:resolve}]
By construction and~\cite[Alg.~3.6]{HaKeLa}, $X\to X_0$ is a modification and $R$ the Cox ring of~$X$. Note that \cite[Alg.~3.6]{HaKeLa} directly translates to the affine setting and the required normality test of $R$ is provided by Lemma~\ref{lem:normal} if the smoothness requirement is ensured (e.g. by explicit computations). 
\end{proof}

\begin{remark}
 If the smoothness test in Algorithm~\ref{algo:resolve} is left out,  $R$ is still the Cox ring of a modification of~$X_0$
 provided that $R$ is normal.
\end{remark}

 We can test smoothness of $X$ using the following straight-forward test; compare also~\cite{HaKe}.
The idea is to first test the toric charts $Z_\sigma$, where $\sigma$ is a cone of the fan of the canonical toric ambient variety $Z$, for being smooth; it then suffices to check the smoothness of $X_\sigma := Z_\sigma\cap X$ if all $Z_\sigma$ are smooth.
\[
 \xymatrix{
 &
 \overline X
 \ar@{-->}[d]
 \ar@{}[r]|\subseteq
 &
 \KK^r
 \ar@{-->}[d]
 &
 \\
 X_\sigma
  \ar@{}[r]|\supseteq
  \ar@/_1.2pc/[rrr]_\subseteq
 &
 X
 \ar[r]
 &
 Z
  \ar@{}[r]|\subseteq
 &
 Z_\sigma
 }
\]
For the latter, we use the following ad-hoc algorithm which directly looks for singularities of $X$ that lie in $Z_\sigma \cong (\KK^*)^l\times \KK^k$ for suitable $k,l\in\ZZ_{\geq 0}$.

\begin{algorithm}[Smoothness test]
\label{algo:issmooth}
\emph{Input: } a normal, irreducible $A_2$-maximal variety~$X$ with $\Cox(X)$ finitely generated.
\begin{itemize}
 \item 
 Compute the fan $\Sigma$ of the canonical toric ambient variety $Z$ of~$X$.
 \item 
 Return \texttt{false} if $\Sigma$ is not regular.
 \item
 Return \texttt{true} if and only if all $X_\sigma$ where $\sigma \in \Sigma$ are smooth.
\end{itemize}
{\em Output: } \texttt{true} if $X$ is smooth, \texttt{false} otherwise.
\end{algorithm}

\begin{remark}
In Algorithm~\ref{algo:issmooth},
the smoothness of $X_\sigma$ can be checked as follows.
According to~\cite[Constr.~3.3.1.1]{ArDeHaLa},
we can compute a  pairwise disjoint decomposition 
\begin{gather*}
X\,=\,X_1\sqcup\ldots \sqcup X_u
\end{gather*}
For each $i$, one then produces equations 
for $X_i$ in $\OO(\KK^m\times (\KK^*)^{n-m})\cong \KK[\sigma^\dual \cap \ZZ^n]$.
 The test then boils down to $X_i^{\rm sing}\cap \KK^m\times \TT^{n-m}$ being empty or not. This can be done by a saturation computation using Gr\"obner bases.
\end{remark}

\begin{remark}
Using the torus action on $X$ and the fact that the singular locus is closed one may often reduce the number of sets $X_{\sigma}$ for which the computations have to be performed to check the smoothness.
\end{remark}

\begin{example}
\label{ex:Q8v2}
Consider the Cox ring $\Cox(X_0)=\KK[T_1,T_2,T_3]/I_0$ 
computed in Example~\ref{ex:Q8v1}
with its degree-map $Q\colon \ZZ^3\to (\ZZ/2\ZZ)^2$.
We apply the steps of Algorithm~\ref{algo:resolve} to $X_0$.
The canonical toric ambient variety is the affine toric variety $Z_0=Z_0(\sigma)$ with $\sigma\subseteq \QQ^3$ being the polyhedral cone spanned by $(1, 0, 0), (1, 2, 0), (1, 0, 2)$.
The following picture shows the steps for the toric resolution.
\begin{center}
\footnotesize
\begin{minipage}{3cm}
\begin{center}
\begin{tikzpicture}[scale=.8]
\fill[color=black!30] (0,0,0) -- (1,0,0) -- (1,2,0);
\fill[color=black!50] (0,0,0) -- (1,0,0) -- (1,0,2);

\fill[color=blue!35] (0,0,0) -- (1,0,0) -- (1,.66,.66);
\fill[color=blue!53] (0,0,0) -- (1,2,0) -- (1,.66,.66);
\fill[color=blue!70] (0,0,0) -- (1,0,2) -- (1,.66,.66);

\fill[color=black!70] (0,0,0) -- (1,0,2) -- (1,2,0);

\fill (0,0,0) circle(1.2pt) node[anchor=east]{$(0,0,0)$};
\fill (1,0,0) circle(1.2pt) node[anchor=west]{$(1,0,0)$};
\fill (1,0,2) circle(1.2pt) node[anchor=west]{$(1,0,2)$};
\fill (1,2,0) circle(1.2pt) node[anchor=south]{$(1,2,0)$};
\end{tikzpicture}
\\
$\sigma$ (gray) and $\Sigma$ (blue) \phantom{and new rays $\QQ$}
\end{center}
\end{minipage}
\qquad\quad 
\qquad\quad 
\begin{minipage}{3cm}
\begin{center}
\begin{tikzpicture}[scale=.8]
\fill[color=blue!35] (0,0,0) -- (1,0,0) -- (1,.66,.66);
\fill[color=blue!52] (0,0,0) -- (1,2,0) -- (1,.66,.66);

\draw[color=red!70!black,thick] (0,0,0) -- (2,1,2) node[anchor=west]{$v_2$};
\fill[color=blue!70] (0,0,0) -- (1,0,2) -- (1,.66,.66);

\draw[color=red!70!black,thick] (0,0,0) -- (3,2,2) node[anchor=west]{$v_1$};
\fill (3,2,2) circle(1.2pt);

\fill (0,0,0) circle(1.2pt) node[anchor=east]{$(0,0,0)$};
\fill (1,0,0) circle(1.2pt) node[anchor=north]{$(1,0,0)$};
\fill (1,0,2) circle(1.2pt) node[anchor=west]{$(1,0,2)$};
\fill (1,2,0) circle(1.2pt) node[anchor=south]{$(1,2,0)$};

\draw[color=red!70!black,thick] (0,0,0) -- (2,2,1) node[anchor=west]{$v_3$};
\draw[color=red!70!black,thick] (0,0,0) -- (2,1,1) node[anchor=west]{$v_4$};

\fill (2,2,1) circle(1.2pt);
\fill (2,1,2) circle(1.2pt);
\fill (2,1,1) circle(1.2pt);

\end{tikzpicture}
\\
$\Sigma$ and new rays $\QQQ\cdot v_i$ (red)
\end{center}
\end{minipage}
\end{center}
The two-dimensional fan $\Sigma = \{\sigma\}\cap\Trop(X_0)$ has already the additional ray $\QQQ\cdot v_1$ where $v_1 := (3,2,2)$. The fan can be resolved by insertion of the further rays through
$$
v_2\,:=\,(2, 1, 2),\qquad
v_3\,:=\,(2, 2, 1),\qquad
v_4\,:=\,(2, 1, 1).
$$
This yields a toric resolution $Z\to Z_0$
which then induces a resolution $X\to X_0$ as the proper transform of $Z\to Z_0$;
its $\ZZ^4$-graded Cox ring $\Cox(X)$ and degree matrix are
\begin{gather*}
\Cox(X)\ =\ \KT{7}/\<
{4}T_{1}^{2}T_{7}-T_{6}T_{2}^{2}+T_{3}^{2}T_{5}
\>,\\
\left[
\mbox{\tiny $
\begin{array}{rrrrrrr}
-1 & -1 & -1 & 1 & 0 & 0 & 0\\
0 & -1 & 0 & 1 & -1 & 1 & -1\\
0 & 0 & -1 & 1 & 1 & -1 & -1\\
0 & -1 & -1 & 0 & 1 & 1 & -1
\end{array}
$}
\right].
\end{gather*}
One verifies that $X$ is smooth. As $X$ is a surface, we can verify minimality of the  resolution by inspecting the self intersection numbers of divisors corresponding to $V(\widehat X;\,T_i)$: they  are 
$-1, -1, -1, -2, -2, -2, -2$.
\qed
\end{example}

We will use Algorithms~\ref{algo:coxquotsing} and~\ref{algo:resolve} in Section~\ref{sec:quotsing}
to compute resolutions of quotient singularities. The next section explains an implementation of the algorithms of this section.
We close this section with the following remark.

\begin{remark}
\label{rem:tevelev}
 A result by Tevelev~\cite{Tev} indicates that the steps 
 performed in Algorithm~\ref{algo:resolve}
should work,
provided that we find the correct embedding $X\to Z$ 
and fan structure on $\Trop(X)$.
Moreover, in~\cite{Tev2},
Tevelev shows that 
any embedded resolution of singularities 
is induced by an equivariant map of toric varieties
in the following sense.
Consider a birational morphism $\pi\colon Y_2\to Y_1$
of smooth projective varieties with exceptional locus $D\subseteq Y_2$ such that the restriction of $\pi$ to $Y_2 \supseteq X_2 \to X_1 \subseteq Y_1$ is a resolution of singularities. Assume that $D$ has simple normal crossings and intersects $X_2$ transversally. Then there are embeddings $Y_1\subseteq \PP^n$
and $Y_2\subseteq Z_2$ with a smooth toric variety $Z_2$
such that $X_2$ and $Y_2$ intersect the toric boundary of $Z_2$
transversally and $\pi$ is a restriction of a toric morphism $\overline{\pi} \colon Z_2 \to Z_1$.
\[
\xymatrix{
X_2
\ar[d]^{\pi}
\ar[r]
&
Y_2 
\ar[d]^\pi
\ar[r]
&
Z_2
\ar[d]^{\overline{\pi}}
\\
X_1
\ar[r]
&
Y_1
\ar[r]
&
Z_1 = \PP^n
}
\]
Hence the main task seems to be to find the correct embeddings $X_i\subseteq Z_i$ -- it would be interesting to investigate this in further work.
\end{remark}

\section{Implementation}
\label{sec:implementation}
We have implemented Algorithms~\ref{algo:coxquotsing} and~\ref{algo:resolve} in the library \texttt{quotsingcox.lib} for the Open Source computer algebra system \texttt{Singular}~\cite{singular}. 
It will be made available at~\cite{quotsingcox}.
We currently make use of the interface to \texttt{Normaliz}~\cite{Normaliz}.
We shortly explain its use in the following two examples.

\begin{example}[Algorithm~\ref{algo:coxquotsing}]
\label{ex:implem1}
 We recompute Example~\ref{ex:Q8v1} using the library  \texttt{quotsingcox.lib}~\cite{quotsingcox}. 
 In~\texttt{Singular}, we first load the built-in library for convex geometry (implemented by Y.~Ren) and our library with
\\[1ex]
\begingroup
\noindent
\footnotesize
\texttt{> LIB \char`\" gfanlib.so\char`\";}
\texttt{ LIB \char`\" quotsingcox.lib\char`\";}\\[1ex]
\endgroup
\noindent
Next, we enter generators for $G\subseteq \GL(2)$. We first have to define a ring where not only the generators of $G$ are defined but also all eigenvalues of~$[G,G]$; $\QQ(i)$ will work here:\\[1ex]
\begingroup
\noindent
\footnotesize
\texttt{> ring R = (0,CplxUnit),T(1..2),dp; minpoly = CplxUnit\textasciicircum 2 + 1;}\\
\texttt{>\ matrix M0[2][2] =\ CplxUnit,0, 0,-CplxUnit;}\\
\texttt{>\ matrix M1[2][2] =  0,-CplxUnit,-CplxUnit,0;}\\
\texttt{>\ list G = M0, M1; // the group G}\\[1ex]
\endgroup
\noindent
Next, we have to enter generators for the derived subgroup. For integer entries, you can also use the command \texttt{list GG = gapDerivedSubgroup(G);} which writes a file and applies \texttt{GAP}~\cite{GAP4}.\\[1ex]
\begingroup
\noindent
\footnotesize
\texttt{>\ matrix M2[2][2] = -1,0,0,-1;}\\
\texttt{>\ list GG = M2; // derived subgroup of G}\\[1ex]
\endgroup
\noindent
We can then compute the Cox ring $\Cox(X_0)$ and investigate it using the following commands:\\[1ex]
\begingroup
\noindent
\footnotesize
\texttt{>\ def R0 = coxquot(G, GG); setring R0; R0;}\\
\texttt{//   characteristic : 0}\\
\texttt{//   1 parameter    : CplxUnit }\\
\texttt{//   minpoly        : (CplxUnit\textasciicircum 2+1)}\\
\texttt{//   number of vars : 3}\\
\texttt{//        block   1 : ordering dp}\\
\texttt{//                  : names    T(1) T(2) T(3)}\\
\texttt{//        block   2 : ordering C}\\
\texttt{>\ ideal I0 = Inew; I0; // Cox ring of X0 is R0/I0}\\
\texttt{I0[1]=T(1)\textasciicircum 2-T(2)\textasciicircum 2-4*T(3)\textasciicircum 2}\\[1ex]
\endgroup
\noindent
Generators for $\Cl(X_0)$ as a matrix subgroup \texttt{Gnew} of $\GL(3)$ and the degrees $\deg(T_i)\in \Cl(X_0)$ of the generators of $\Cox(X_0)$ can be displayed 
with \texttt{print(Gnew[1])} and 
 \texttt{print(Gnew[2])}. This yields the matrices with columns
 $-e_1,-e_2,e_3$ and $-e_2,-e_1,-e_3$, respectively.

\begingroup
\noindent
\footnotesize
\texttt{>\ Wnew; // degrees of the generators (eigenvalues)}\\[1ex]
\texttt{[1]:}\\
\texttt{\ \ \_[1,1]=-1}\\
\texttt{\ \ \_[2,1]=-1}\\
\texttt{[2]:}\\
\texttt{\ \ \_[1,1]=-1}\\
\texttt{\ \ \_[2,1]=1}\\
\texttt{[3]:}\\
\texttt{\ \ \_[1,1]=1}\\
\texttt{\ \ \_[2,1]=-1}
\endgroup
\noindent
\qed
\end{example}

\begin{example}[Algorithm~\ref{algo:resolve}]
Building on Example~\ref{ex:implem1}, we recompute Example~\ref{ex:Q8v2}.
The next step has to be prepared manually: one has to give the isomorphism type of \texttt{Gnew} as a product $\prod \ZZ/n_i\ZZ$.
Here, \texttt{gapStructureDescription(Gnew);} can be used to see that $n_1=n_2=2$.
We store the integers $n_i$, translate the degrees given in \texttt{Wnew} into the columns of a matrix $Q_0$ and compute a Gale dual matrix $P_0$ for~$Q_0$:\\[1ex]
\begingroup
\noindent
\footnotesize
\texttt{>\ list ClX0 = 2, 2; // ZZ/2ZZ x ZZ/2ZZ}\\
\texttt{>\ intmat Q0[2][3] = 1,1,0,1,0,1; // degree matrix}\\
\texttt{>\ intmat P0 = finiteGaleDual(Q0, ClX0); P0;}\\
\texttt{\ \ 1,1,1}\\
\texttt{\ \ 1,2,0}\\
\texttt{\ \ 0,0,2}\\[1ex]
\endgroup
\noindent
We can then
compute, verify and print a candidate for the Cox ring
of a resolution $X\to X_0$ using Algorithm~\ref{algo:resolve} with the following commands.
We present the Cox ring as $\KT{s}/I_0$; the algorithm returns the ideal~$I_0$. \\[1ex]
\begingroup
\noindent
\footnotesize
\texttt{>\ def R = resolveQuotSing(I0, P0, 1); setring R; R; // 1 means verify}\\
\texttt{1st part of Verification successful: please check primality of the variables (e.g. with primeVars).}\\
\texttt{//   characteristic : 0}\\
\texttt{//   1 parameter    : CplxUnit }\\
\texttt{//   minpoly        : (CplxUnit\textasciicircum 2+1)}\\
\texttt{//   number of vars : 7}\\
\texttt{//        block   1 : ordering dp}\\
\texttt{//                  : names    T(1) T(2) T(3) T(4) T(5) T(6) T(7)}\\
\texttt{//        block   2 : ordering C}\\
\texttt{>\ Inew; // Cox ring of X is R/Inew}\\
\texttt{Inew[1]=T(1)\textasciicircum 2*T(5)-T(2)\textasciicircum 2*T(6)-4*T(3)\textasciicircum 2*T(7)}\\[1ex]
\endgroup
\noindent
One can check the primality of the variables $T_i$ using the command \texttt{primeVars(Inew);}.
The $\ZZ^4$-grading is stored in the new degree matrix $Q$; it is obtained as a Gale dual matrix $Q$ to the new matrix~$P$:\\[1ex]
\begingroup
\noindent
\footnotesize
\texttt{>\ intmat P = L[2]; }\\
\texttt{>\ intmat Q = gale(P); Q; }\\
\texttt{\ \ -1,-1,-1,1,0,0,0,}\\
\texttt{\ \ -2,-1,-1,0,2,0,0,}\\
\texttt{\ \ 3,0,1,0,-4,2,0,}\\
\texttt{\ \ 2,0,0,0,-3,1,1 }
\endgroup
\noindent
\qed
\end{example}

\section{Resolutions of $3$-dimensional quotient singularities}
\label{sec:quotsing}

\subsection{Cox rings}
In this section, we compute Cox rings of resolutions of quotient singularities $\KK^3/G$ where~$G$ is a group of order at most $12$ and discuss the results.
As explained in Remark~\ref{rem:pseudo-reflections}, we will consider only faithful representations of~$G$ without pseudo-reflections.

\begin{notation}
In the following, we denote by $S_3$ the permutation group of three elements,
by $D_{2n}$ the dihedral group of $2n$ elements, by $Q_8$ the quaternion group
and by $A_4$ the alternating group of four elements.
Moreover, we write $\BD_3$ for the {\em binary dihedral group}, i.e., the abstract group with $12$ elements 
$$
\BD_3
\ =\ 
\left\langle a,x;\,a^{6} = 1,\,x^2 = a^3,\,x^{-1}ax = a^{-1}
\right\rangle
.
$$
\end{notation}

\begin{proposition}
\label{prop:reps}
 Up to conjugacy in $\GL(3)$, the $3$-dimensional faithful small representations 
  of non-abelian groups up to order $12$ are as follows.
 
\begingroup
\footnotesize
\def\arraystretch{0.6}
\begin{center}
\begin{minipage}{5cm}
\setlength{\arraycolsep}{1pt}
\begin{longtable}{ccl}
\myhline
No. & $G$ & gen.s in $\GL(3)$\\
\myhline\\
$1$ & $S_3$ & 
$
\left[\mbox{\tiny $
\begin{array}{rrr}
  0 & -1 & 0 \\
 1 & -1 & 0 \\ 
 0 & 0 & 1
 \end{array}
 $}\right],\quad 
 \left[\mbox{\tiny $
\begin{array}{rrr}
-1 & 1 & 0 \\
0 & 1 & 0 \\ 
0 & 0 &-1
 \end{array}
 $}\right].
$
\\\\\hline\\
$2$ & $D_8$ &  $\left[\mbox{\tiny $
\begin{array}{rrr}
0&-1&0\\
1&0&0\\
0&0&1\end{array}
 $}\right],\quad \left[\mbox{\tiny $
\begin{array}{rrr}
1&0&0\\
0&-1&0\\
0&0&-1
\end{array}
 $}\right].$

\\\\\hline\\
$3$ & $Q_8$ & $\left[\mbox{\tiny $
\begin{array}{rrr}
i & 0 & 0 \\
0 & -i & 0 \\
0 & 0 & 1\end{array}
 $}\right],
\quad \left[\mbox{\tiny $
\begin{array}{rrr}
0 & -1 & 0 \\
1 & 0 & 0 \\
0 & 0 & 1
\end{array}
 $}\right]
$. 
\\\\\hline\\
$4$ & $Q_8$ & $\left[\mbox{\tiny $
\begin{array}{rrr}
i & 0 & 0 \\
0 & -i & 0 \\
0 & 0 & -1\end{array}
 $}\right],
\quad \left[\mbox{\tiny $
\begin{array}{rrr}
0 & -1 & 0 \\
1 & 0 & 0 \\
0 & 0 & 1
\end{array}
$}\right]
$. 
\\\\\hline\\
$5$ & $D_{10}$ & $\left[\mbox{\tiny $
\begin{array}{rrr}
\zeta_5 & 0 & 0 \\
0 & \zeta_5^4 & 0 \\
0 & 0 & 1\end{array}
 $}\right],
\quad \left[\mbox{\tiny $
\begin{array}{rrr}
0 & 1 & 0 \\
1 & 0 & 0 \\
0 & 0 & -1
\end{array}
$}\right]$. 
\\\\
\myhline
\end{longtable}

\end{minipage}
\qquad 
\begin{minipage}{5cm}
\setlength{\arraycolsep}{1pt}
\begin{longtable}{ccl}
\myhline
No. & $G$ & gen.s in $\GL(3)$\\
\myhline\\
$6$ & $D_{12}$ & $\left[\mbox{\tiny $
\begin{array}{rrr}
-\zeta_3^2 & 0 & 0 \\
0 & -\zeta_3 & 0 \\
0 & 0 & 1\end{array}
 $}\right],
\quad \left[\mbox{\tiny $
\begin{array}{rrr}
0 & 1 & 0 \\
1 & 0 & 0 \\
0 & 0 & -1
\end{array}
 $}\right].$  
\\\\\hline\\
$7$ & $A_{4}$ & $\left[\mbox{\tiny $
\begin{array}{rrr}
0 & 0 & 1 \\
1 & 0 & 0 \\
0 & 1 & 0\end{array}
 $}\right],
\quad \left[\mbox{\tiny $
\begin{array}{rrr}
-1 & 0 & 0 \\
0 & -1 & 0 \\
0 & 0 & 1
\end{array}
 $}\right].
$ 
\\\\\hline\\
$8$ & $\BD_3$ & $\left[\mbox{\tiny $
\begin{array}{rrr}
-\zeta_3^2 & 0 & 0 \\
0 & -\zeta_3 & 0 \\
0 & 0 & 1\end{array}
 $}\right],
\quad \left[\mbox{\tiny $
\begin{array}{rrr}
0 & -i & 0 \\
-i & 0 & 0 \\
0 & 0 & 1
\end{array}
$}\right].
$ 
\\\\\hline\\
$9$ & $\BD_3$ & $\left[\mbox{\tiny $
\begin{array}{rrr}
-\zeta_3^2 & 0 & 0 \\
0 & -\zeta_3 & 0 \\
0 & 0 & -1\end{array}
 $}\right],
\quad \left[\mbox{\tiny $
\begin{array}{rrr}
0 & -i & 0 \\
-i & 0 & 0 \\
0 & 0 & i
\end{array}
$}\right]
$.
\\\\\hline\\
$10$ & $\BD_3$ & $\left[\mbox{\tiny $
\begin{array}{rrr}
-\zeta_3^2 & 0 & 0 \\
0 & -\zeta_3 & 0 \\
0 & 0 & 1\end{array}
 $}\right],
\quad \left[\mbox{\tiny $
\begin{array}{rrr}
0 & -i & 0 \\
-i & 0 & 0 \\
0 & 0 & -1
\end{array}
$}\right]
$. 
\\\\
\myhline
\end{longtable}

\end{minipage}
\end{center}
\endgroup
In the table, $i\in \CC$ denotes the imaginary unit,
and $\zeta_k\in \CC$ is a primitive $k$-th root of unity.
All listed representations are reducible except for the $A_4$-case.
\end{proposition}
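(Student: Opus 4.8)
The plan is to fix the abstract isomorphism type of $G$, enumerate its three-dimensional complex representations from the character table, discard those that are not faithful or not small, and finally collect the survivors into conjugacy classes of their images in $\GL(3)$.

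First I would reduce to finitely many groups. By the classification of finite groups of order at most $12$, the non-abelian ones are exactly $S_3$, $D_8$, $Q_8$, $D_{10}$, $D_{12}$, $A_4$ and $\BD_3$. For each I would invoke the character table. Only $A_4$ has an irreducible representation in dimension $3$ (the standard rotation representation); for every other group on the list the irreducible degrees are $1$ and $2$. Hence, apart from $A_4$, every three-dimensional representation splits as $W\oplus L$ with $\dim_\CC W\in\{1,2\}$ and $L$ a character. A sum of three characters factors through the abelianization $G^{\mathrm{ab}}$ and is therefore never faithful, so faithfulness forces the two-dimensional summand to appear; this reduces the analysis to pairs $(W,L)$ with $W$ a two-dimensional representation and $L$ a character.

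Next I would apply the two filters. For faithfulness I would read off the faithful two-dimensional irreducibles from each table: $D_8$ and $Q_8$ have a unique two-dimensional irreducible and it is faithful; $D_{10}$ has two, both faithful; while $D_{12}$ and $\BD_3$ have two, of which exactly one is faithful, the other factoring through the quotient by the centre (isomorphic to $S_3$). For smallness the decisive observation is that, each $A\in G$ being diagonalizable of finite order, $A$ is a pseudo-reflection exactly when $1$ is an eigenvalue of $A$ on $W\oplus L$ of multiplicity precisely $2$. This occurs in one of two ways: either $A$ acts trivially on $W$ while $L(A)\neq 1$, or $A$ acts on $W$ as a genuine reflection (eigenvalues $1,\mu$ with $\mu\neq 1$) while $L(A)=1$. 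The first case is fatal for a non-faithful $W$: its kernel is the central involution, and making $W\oplus L$ faithful forces $L$ to separate that element, turning it into a pseudo-reflection; hence a faithful small representation must have $W$ faithful. The second case is the genuinely selective one: for the dihedral groups $S_3,D_8,D_{10},D_{12}$ the reflections act on $W$ with eigenvalues $1,-1$, so smallness pins $L$ down to the sign character taking value $-1$ on all reflections. For $Q_8$ and $\BD_3$ the only involution is central and acts on the faithful $W$ as $-\id$, so there are no genuine reflections, the second condition is vacuous, and every character $L$ yields a small representation.

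Finally I would count conjugacy classes of images. Two faithful representations differing by an automorphism of $G$ have the same image up to conjugation in $\GL(3)$, so the surviving pairs $W\oplus L$ must be identified accordingly: the outer automorphisms of $Q_8$ permute its three non-trivial characters, those of $\BD_3$ interchange its two order-$4$ characters while fixing those of order $1$ and $2$, and an outer automorphism of $D_{10}$ swaps its two faithful $W$. After these identifications one is left with a single class for each of $S_3,D_8,D_{10},D_{12},A_4$, two for $Q_8$ (according to whether $L$ is trivial) and three for $\BD_3$ (indexed by the order $1,2,4$ of $L$), and the irreducibility assertion is immediate since only the $A_4$-representation does not split off a character. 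I expect the main difficulty to be precisely this last step for the order-$12$ groups $D_{12}$ and $\BD_3$: one must correctly single out the faithful two-dimensional irreducible, check that any non-faithful summand indeed forces a central pseudo-reflection, and keep track of the $\Aut(G)$-action so as not to over- or under-count. Once representatives are fixed, writing down the generating matrices displayed in the table and verifying them is routine.
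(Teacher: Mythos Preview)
Your argument is correct and follows the same five-step outline as the paper's proof: list the non-abelian groups of order at most $12$, read off irreducible degrees from the character tables, assemble the faithful three-dimensional representations, discard those containing a pseudo-reflection, and identify the survivors up to conjugacy of their images. The only difference is one of style in the last step: the paper separates the remaining candidates computationally by checking (in \texttt{GAP}) that their multisets of traces are distinct, whereas you carry out the identification by hand via the $\Aut(G)$-action on isomorphism classes of representations---these are equivalent, since two faithful representations have $\GL(3)$-conjugate images precisely when one is isomorphic to the other precomposed with an automorphism of $G$.
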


\begin{proof}
To classify these representations we use the library of groups of small order, which is a part of \texttt{GAP}~\cite{GAP4}. We do the following steps:
\begin{enumerate}
\item view the character tables to get dimensions of irreducible representations,
\item find all irreducible representations: for a group of small order and a given character one can either locate a representation in the literature or construct easily,
\item combine irreducible representations to get all faithful 3-dimensional representations of a given group (in particular, direct sums of 1-dimensional representations are not allowed since they are not faithful representations -- the image is an abelian group),
\item eliminate representations with pseudo-reflections (see Remark~\ref{rem:pseudo-reflections}),
\item check whether obtained representations have different groups as images (if representations differ just by a permutation of conjugacy classes in $G$, they give the same quotient); for any two representations of the same group on the list above the set of matrix traces are different, hence this condition is satisfied.\qedhere
\end{enumerate}
\end{proof}

\begin{proposition}
\label{prop:coxX0}
In the setting of Proposition~\ref{prop:reps},
the Cox rings of $X_0:=\KK^3/G$
are as listed the following table.

\begingroup
\footnotesize
\def\arraystretch{0.5}
\begin{longtable}{ccccc}
\myhline
No.  & $G$ & $Cl(X_0)$ & degree matrix & $\Cox(X_0)$\\
\myhline\\
$1$ & $S_3$ & $\ZZ/2\ZZ$ & $\left[\mbox{\tiny $
\begin{array}{rrrr}
\b 1 & \b 1 & \b 0 & \b 0
\end{array}
$}\right]$  
& 
$\begin{array}{c}
 \KT{4}/I \text{ with $I$ gen.~by }
 \\
 4T_{3}^3-T_{2}^2-27T_{4}^2
\end{array}
$
\\\\\hline\\
$2$ & $D_8$ & $\ZZ/2\ZZ\times \ZZ/2\ZZ$ & $\left[\mbox{\tiny $
\begin{array}{rrrr}
\b 1 & \b  1 &  \b 0 & \b  0 \\
\b 1 &  \b 0 & \b  1 & \b  0
\end{array}
$}\right]$  
& 
$\begin{array}{c}
 \KT{4}/I \text{ with $I$ gen.~by }
 \\
4T_{1}^2+T_{2}^2-T_{4}^2
\end{array}
$
\\\\\hline\\
$3$ & $Q_8$ & $\ZZ/2\ZZ\times \ZZ/2\ZZ$ & $\left[\mbox{\tiny $
\begin{array}{rrrr}
\b 1 & \b  1 & \b  0 & \b  0 \\
\b 1 & \b  0 & \b  1 & \b  0
\end{array}
$}\right]$  
& 
$\begin{array}{c}
 \KT{4}/I \text{ with $I$ gen.~by }
 \\
T_{1}^2-T_{2}^2+4T_{3}^2
\end{array}
$
\\\\\hline\\
$4$ & $Q_8$ & $\ZZ/2\ZZ\times \ZZ/2\ZZ$ & $\left[\mbox{\tiny $
\begin{array}{rrrr}
\b   1 & \b  1 & \b  1 & \b  0 \\
 \b  1 & \b  0 & \b  0 & \b  1
\end{array}
$}\right]$  
& 
$\begin{array}{c}
 \KT{4}/I \text{ with $I$ gen.~by }
 \\
T_{1}^2-T_{3}^2+4T_{4}^2
\end{array}
$
\\\\\hline\\
$5$ & $D_{10}$ & $\ZZ/2\ZZ$ & $\left[\mbox{\tiny $
\begin{array}{rrrr}
 \b 1 & \b  1 & \b  0 & \b  0
\end{array}
$}\right]$  
& 
$\begin{array}{c}
 \KT{4}/I \text{ with $I$ gen.~by }
 \\
4T_{3}^5+T_{2}^2-T_{4}^2
\end{array}
$
\\\\\hline\\
$6$ & $D_{12}$ & $\ZZ/2\ZZ\times \ZZ/2\ZZ$ & $\left[\mbox{\tiny $
\begin{array}{rrrr}
 \b  1 & \b  1 & \b  0 & \b  0 \\
 \b  1 & \b  0 & \b  1 & \b  0
\end{array}
$}\right]$  
& 
$\begin{array}{c}
 \KT{4}/I \text{ with $I$ gen.~by }
 \\
4T_{4}^3+T_{1}^2-T_{2}^2
\end{array}
$
\\\\\hline\\
$7$ & $A_4$ & $\ZZ/3\ZZ$ & $\left[\mbox{\tiny $
\begin{array}{rrrr}
 \b  0 & \b  0 & \b  2 & \b  1
\end{array}
$}\right]$  
& 
$\begin{array}{c}
 \KT{4}/I \text{ with $I$ gen.~by }
 \\
T_{1}^3+T_{3}^3-3T_{1}T_{3}T_{4}+T_{4}^3-27T_{2}^2
\end{array}
$
\\\\\hline\\
$8$ & $BD_3$ & $\ZZ/4\ZZ$ & $\left[\mbox{\tiny $
\begin{array}{rrrr}
 \b  3 & \b  1 & \b  2 & \b  0
\end{array}
$}\right]$  
& 
$\begin{array}{c}
 \KT{4}/I \text{ with $I$ gen.~by }
 \\
4T_{3}^3+T_{1}^2-T_{2}^2
\end{array}
$
\\\\\hline\\
$9$ & $BD_3$ & $\ZZ/4\ZZ$ & $\left[\mbox{\tiny $
\begin{array}{rrrr}
 \b  3 &\b  1 & \b 1 & \b 2
\end{array}
$}\right]$  
& 
$\begin{array}{c}
 \KT{4}/I \text{ with $I$ gen.~by }
 \\
4T_{4}^3+T_{1}^2-T_{3}^2 
\end{array}
$
\\\\\hline\\
$10$ & $BD_3$ & $\ZZ/4\ZZ$ & $\left[\mbox{\tiny $
\begin{array}{rrrr}
  \b  3 & \b 1 & \b 2 & \b 2
\end{array}
$}\right]$  
& 
$\begin{array}{c}
 \KT{4}/I \text{ with $I$ gen.~by }
 \\
4T_{4}^3+T_{1}^2-T_{2}^2
\end{array}
$
\\\\
\myhline
\end{longtable}
\endgroup
 
\end{proposition}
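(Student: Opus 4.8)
The plan is to obtain each row of the table by running Algorithm~\ref{algo:coxquotsing} on the corresponding representation from Proposition~\ref{prop:reps}, using throughout the identification $\Cox(X_0)\cong\KK[S_1,S_2,S_3]^{[G,G]}$ of~\cite[Thm~3.1]{ArGa}, graded by $\XX(G')$ with $G':=G/[G,G]$. The first step is purely group-theoretic: for each $G$ I would compute the derived subgroup $[G,G]$ and the abelianization $G'$. The non-abelian groups of order at most $12$ have standard abelianizations — $S_3$ and $D_{10}$ yield $\ZZ/2\ZZ$, the groups $D_8,Q_8,D_{12}$ yield $(\ZZ/2\ZZ)^2$, $A_4$ yields $\ZZ/3\ZZ$, and $\BD_3$ yields $\ZZ/4\ZZ$ — which already reproduces the column $\Cl(X_0)=\XX(G')$. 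I would also write each $[G,G]$ explicitly as a matrix group in the given representation; in all ten cases it is diagonal: for example $[D_8,D_8]=[Q_8,Q_8]=\langle\diag(-1,-1,1)\rangle$ in representations~2--4, the cyclic groups $\ZZ/5\ZZ$ respectively $\ZZ/3\ZZ$ (the squared rotation) acting by conjugate characters on the first two coordinates and trivially on the third in the dihedral and binary-dihedral cases, and the even-sign Klein four group $[A_4,A_4]=V_4$ in representation~7.

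The second step is the invariant-theoretic core, executed once per group. For each $[G,G]$ I would produce a minimal generating set of $\KK[S_1,S_2,S_3]^{[G,G]}$ by classical means (a Molien-series computation to predict the generator degrees, then explicit invariants). Because each $[G,G]$ acts diagonally, the invariants are easy to list: the Veronese generators $S_1^2,S_1S_2,S_2^2$ together with $S_3$ for $\diag(-1,-1,1)$; the generators $S_1S_2,\,S_1^k,\,S_2^k$ together with $S_3$ for a cyclic action of order $k$ on the first two coordinates; and $S_1^2,S_2^2,S_3^2,S_1S_2S_3$ for the $V_4$ of $A_4$. In every case the ring is minimally generated by exactly four elements, so $s=4$ and $\Cox(X_0)=\KT{4}/I$. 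Since $\dim\Spec(\Cox(X_0))=\dim X_0=3$ while there are four generators, $I$ has codimension one; I would then confirm that the single relation listed in the last column generates the whole relation ideal. For the diagonal cases this relation is the evident Veronese-type identity (e.g. $(S_1^2)(S_2^2)=(S_1S_2)^2$ or $(S_1^2)(S_2^2)(S_3^2)=(S_1S_2S_3)^2$), whose coefficients then change under the eigenbasis rescaling of the next step.

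The third step installs the grading following Construction~\ref{con:GGhomog}: I would let $G'$ act on each graded piece of the invariant ring, simultaneously diagonalize the induced matrices (legitimate since $G'$ is abelian), and replace the generators by a $G'$-eigenbasis $g_1,\dots,g_4$, whose eigenvalues $\chi_{ij}$ assemble into the degree matrix. Eliminating $S_1,S_2,S_3$ from $\langle T_i-g_i\rangle$ then yields $I$ in the variables $T_i$. A convenient internal check is homogeneity of the output relation for the claimed degrees — for instance every monomial of the $A_4$ relation $T_1^3+T_3^3-3T_1T_3T_4+T_4^3-27T_2^2$ has $\XX(\ZZ/3\ZZ)$-degree $\bar 0$ for the weights $(\bar 0,\bar 0,\bar 2,\bar 1)$, and similarly for each other row.

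I expect the main obstacle to be pinning down the precise coefficients of the relations once the eigenbasis is fixed, and here the $A_4$ case — the only irreducible representation on the list — is the genuinely non-routine one: diagonalizing the $\ZZ/3\ZZ$-action that cyclically permutes the three squares $S_i^2$ turns the symmetric Veronese relation into the twisted cubic above, and the coefficients $-3$ and $-27$ emerge only after the elimination, reflecting the passage from power sums to a cubic discriminant. For the reducible representations the derived subgroups act diagonally, so steps two and three reduce to bookkeeping: the $G'$-eigen-combinations $u^k\pm v^k$ of the two degree-$k$ invariants become the squared generators, while the $G'$-invariant product $(uv)^k=S_1^kS_2^k$ becomes the pure power, the relation $4\,T_{uv}^{\,k}-T_{\mathrm{sym}}^2+T_{\mathrm{anti}}^2=0$ arising from the identity $(u^k+v^k)^2-(u^k-v^k)^2=4(uv)^k$, with the recurring constant $4$ (and the signs) produced by the eigenbasis normalization. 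Each resulting row can finally be cross-checked against the \texttt{Singular} output of Algorithm~\ref{algo:coxquotsing}, as already illustrated for $Q_8$ in Examples~\ref{ex:Q8v1} and~\ref{ex:implem1}.
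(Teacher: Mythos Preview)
Your proposal is correct and follows exactly the same approach as the paper: both proofs amount to applying Algorithm~\ref{algo:coxquotsing} to the representations of Proposition~\ref{prop:reps}, with the paper simply invoking its \texttt{Singular} implementation while you spell out the three steps (compute $[G,G]$ and $G'$, find generators of $\KK[S_1,S_2,S_3]^{[G,G]}$, diagonalize the $G'$-action and eliminate) by hand. One small slip: in case~1 the derived subgroup of $S_3$ is not literally diagonal in the given basis (the $3$-cycle matrix has a $2\times 2$ block), though it is of course diagonalizable, so your invariant-theoretic bookkeeping still goes through after a change of coordinates.
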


\begin{proof}
 The listed Cox rings are obtained by applying Algorithm~\ref{algo:coxquotsing} 
 to the list of representations from Proposition~\ref{prop:reps}
 using our implementation~\ref{sec:implementation}.
\end{proof}

\begin{remark}
Note that several rings on the list (without grading considered) are isomorphic. This is because the ring structure of the Cox ring of $\KK^n/G$ is just the invariant ring $\KK[x_1,\ldots,x_n]^{[G,G]}$ and derived subgroups in cases 2, 3, 4 ($D_8$ and $Q_8$ representations) and also 6, 8, 9, 10 ($D_{12}$ and $BD_3$ representations) are conjugate subgroups of~$\GL(3)$.
\end{remark}

\begin{theorem}
\label{thm:dim3}
In the setting of Proposition~\ref{prop:coxX0}, a resolution $X\to X_0$ of the quotient $X_0 = \KK^3/G$ has the following Cox ring, respectively.

\begingroup
\setlength{\tabcolsep}{2pt}
\arraycolsep=1.2pt
\def\arraystretch{0.6}
\footnotesize
\begin{longtable}{ccccc}
\myhline
No.  & $G$ & $Cl(X)$ & degree matrix & $\Cox(X)$\\
\myhline
\\
$1$ & $S_3$ & $\ZZ^4$ & $\left[\mbox{\tiny $
\begin{array}{rrrrrrrr}
0&-3&-2&-3&1&0&0&0\\
-1&-1&0&0&0&2&0&0\\
1&0&-1&-1&0&-2&1&0\\
1&0&-1&-2&0&-3&0&1
\end{array}
$}\right]$  
& 
$\begin{array}{c}
 \KT{8}/I \text{ with $I$ gen.~by }
 \\
4T_{3}^3T_{7}-T_{2}^2T_{6}-27T_{4}^2T_{8}
\end{array}
$
\\
\\\hline \\
$2$ & $D_8$ & $\ZZ^{4}$ & $\left[\mbox{\tiny $
\begin{array}{rrrrrrrr}
-1&-1&0&-1&1&0&0&0\\
-1&0&-1&0&0&2&0&0\\
-1&-1&0&-2&0&0&2&0\\
1&0&0&1&0&-1&-1&1
\end{array}
$}\right]$  
& 
$\begin{array}{c}
 \KT{8}/I \text{ with $I$ gen.~by }
 \\
 -4T_{1}^2T_{6}+T_{4}^2T_{7}-T_{2}^2T_{8}
\end{array}
$
\\\\\hline \\
$3$ & $Q_8$ & $\ZZ^{4}$ & $\left[\mbox{\tiny $
\begin{array}{rrrrrrrr}
-1&-1&-1&0&1&0&0&0\\
-2&-1&-1&0&0&2&0&0\\
1&0&-1&0&0&-2&2&0\\
2&0&0&0&0&-3&1&1
\end{array}
$}\right]$  
& 
$\begin{array}{c}
 \KT{8}/I \text{ with $I$ gen.~by }
 \\
 T_{1}^2T_{6}+4T_{3}^2T_{7}-T_{2}^2T_{8}
\end{array}
$
\\\\\hline \\
$4$ & $Q_8$ & $\ZZ^{4}$ & $\left[\mbox{\tiny $
\begin{array}{rrrrrrrr}
-1&0&-1&-1&1&0&0&0\\
-2&-1&-1&-1&0&2&0&0\\
1&0&0&-1&0&-2&2&0\\
2&1&0&0&0&-3&1&1
\end{array}
$}\right]$  
& 
$\begin{array}{c}
 \KT{8}/I \text{ with $I$ gen.~by }
 \\
 -T_{1}^2T_{6}-4T_{4}^2T_{7}+T_{3}^2T_{8}
\end{array}
$
\\\\\hline \\
$5$ & $D_{10}$ & $\ZZ^{5}$ & $\left[\mbox{\tiny $
\begin{array}{rrrrrrrrr}
0 & -5 & -2 & -5 & 1 & 0 & 0 & 0 & 0 \\
-1 & -1 & 0 & 0 & 0 & 2 & 0 & 0 & 0 \\
1 & 0 & -1 & -1 & 0 & -2 & 1 & 0 & 0 \\
2 & 0 & -1 & -2 & 0 & -4 & 0 & 1 & 0 \\
2 & 0 & -1 & -3 & 0 & -5 & 0 & 0 & 1
\end{array}
$}\right]$  
& 
$\begin{array}{c}
 \KT{9}/I \text{ with $I$ gen.~by }
 \\
 4T_{3}^5T_{7}^3T_{8}+T_{2}^2T_{6}-T_{4}^2T_{9}
\end{array}
$
\\\\\hline \\
$6$ & $D_{12}$ & $\ZZ^{5}$ & $\left[\mbox{\tiny $
\begin{array}{rrrrrrrrr}
-3 & -3 & 0 & -2 & 2 & 0 & 0 & 0 & 0 \\
-1 & 0 & -1 & 0 & 0 & 2 & 0 & 0 & 0 \\
1 & 1 & 0 & 0 & -1 & 0 & 1 & 0 & 0 \\
2 & 1 & 0 & 1 & -1 & -1 & 0 & 1 & 0 \\
2 & 2 & 0 & 1 & -2 & 0 & 0 & 0 & 1
\end{array}
$}\right]$  
& 
$\begin{array}{c}
 \KT{9}/I \text{ with $I$ gen.~by }
 \\
 4T_{4}^3T_{7}^2T_{9}+T_{1}^2T_{6}-T_{2}^2T_{8}
\end{array}
$
\\\\\hline \\
$7$ & $A_4$ & $\ZZ^{3}$ & $\left[\mbox{\tiny $
\begin{array}{rrrrrrrrrr}
 0 & 0 & -1 & -1 &  0 & -1 & -1 &  1 & 1 & 0\\
 0 & 0 &  0 & -1 &  0 &  0 & -1 & -1 & 2 & 0\\
-1 & 0 &  0 &  0 & -1 & -1 & -1 &  0 & 0 & 1\\
\end{array}
$}\right]$  
& 
$\begin{array}{c}
 \KT{10}/I \text{ with $I$ gen.~by }
 \\
T_{4}T_{7}T_{9}+T_{2}T_{6}+T_{3}T_{5},\\
T_{4}^2T_{9}-T_{2}T_{3}-T_{6}T_{10},\\
T_{3}T_{6}T_{8}+T_{2}T_{7}+T_{4}T_{5},\\
T_{3}^2T_{8}-T_{2}T_{4}-T_{7}T_{10},\\
27T_{1}^2T_{4}-T_{6}^2T_{8}+T_{5}T_{7},\\
27T_{1}^2T_{3}-T_{7}^2T_{9}+T_{5}T_{6},\\
T_{6}T_{7}T_{8}T_{9}+27T_{1}^2T_2-T_{5}^2,\\
T_{3}T_{4}T_{8}T_{9}-T_{2}^2+T_{5}T_{10},\\
T_{3}T_{7}T_{8}T_{9}+T_{4}T_{6}T_{8}T_{9}-27T_{1}^2T_{10}+T_{2}T_{5}
\end{array}
$
\\\\\hline \\
$8$ & $BD_3$ & $\ZZ^{5}$ & $\left[\mbox{\tiny $
\begin{array}{rrrrrrrrr}
-3 & -3 & -2 & 0 & 2 & 0 & 0 & 0 & 0 \\
2 & 3 & 2 & 0 & -3 & 2 & 0 & 0 & 0 \\
-1 & -2 & -2 & 0 & 2 & -2 & 1 & 0 & 0 \\
-2 & -5 & -3 & 0 & 5 & -5 & 0 & 1 & 0 \\
-2 & -4 & -3 & 0 & 4 & -4 & 0 & 0 & 1
\end{array}
$}\right]$  
& 
$\begin{array}{c}
 \KT{9}/I \text{ with $I$ gen.~by }
 \\
 4T_{3}^3T_{7}^2T_{9}+T_{1}^2T_{6}-T_{2}^2T_{8}
\end{array}
$
\\\\\hline \\
$9$ & $BD_3$ & $\ZZ^{8}$ & $\left[\mbox{\tiny $
\begin{array}{rrrrrrrrrrrr}
-3 & 0 & -3 & -2 & 1 & 0 & 0 & 0 & 0 & 0 & 0 & 0\\
-1 & -1 & -1 & -2 & 0 & 2 & 0 & 0 & 0 & 0 & 0 & 0\\ 
-1 & 0 & 0 & 2 & 0 & -3 & 2 & 0 & 0 & 0 & 0 & 0 \\
3 & 1 & 0 & -3 & 0 & 5 & -5 & 1 & 0 & 0 & 0 & 0 \\
2 & 1 & 0 & -3 & 0 & 4 & -4 & 0 & 1 & 0 & 0 & 0 \\
2 & 0 & 0 & -3 & 0 & 4 & -4 & 0 & 0 & 1 & 0 & 0 \\
3 & 1 & 0 & -4 & 0 & 6 & -6 & 0 & 0 & 0 & 1 & 0 \\
4 & 2 & 0 & -6 & 0 & 9 & -9 & 0 & 0 & 0 & 0 & 1
\end{array}
$}\right]$  
& 
$\begin{array}{c}
 \KT{12}/I \text{ with $I$ gen.~by }
 \\
  4T_{4}^3T_{6}^2T_{9}T_{10}+T_{1}^2T_{7}T_{12}-T_{3}^2T_{8}
\end{array}
$
\\\\\hline \\
$10$ & $BD_3$ & $\ZZ^{5}$ & $\left[\mbox{\tiny $
\begin{array}{rrrrrrrrr}
-3 & -3 & 0 & -2 & 2 & 0 & 0 & 0 & 0 \\
1 & 1 & 0 & 0 & -1 & 1 & 0 & 0 & 0 \\
2 & 3 & -1 & 2 & -3 & 0 & 2 & 0 & 0 \\
-2 & -5 & 2 & -3 & 5 & 0 & -5 & 1 & 0 \\
-2 & -4 & 2 & -3 & 4 & 0 & -4 & 0 & 1
\end{array}
$}\right]$  
& 
$\begin{array}{c}
 \KT{9}/I \text{ with $I$ gen.~by }
 \\
 4T_{4}^3T_{6}^2T_{9}+T_{1}^2T_{7}-T_{2}^2T_{8}
 \\
\end{array}
$
\\\\
\myhline
\end{longtable}
\endgroup
\end{theorem}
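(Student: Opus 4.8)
The plan is to produce each row of the table by running Algorithm~\ref{algo:resolve} on the corresponding group from Proposition~\ref{prop:reps}, using as input the presentation $\Cox(X_0) = \KT{4}/I_0$ and the degree matrix $Q_0$ already determined in Proposition~\ref{prop:coxX0}. Since Algorithm~\ref{algo:resolve} carries its own verification (the primality and codimension checks of \cite[Alg.~3.6]{HaKeLa} together with the smoothness test, Algorithm~\ref{algo:issmooth}), the task reduces to carrying out its steps case by case and recording that the certified output agrees with the stated $R$ and $Q$.

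Concretely, for each case I would first fix a Gale dual $P_0$ of $Q_0$ and write down the cone $\sigma = P_0(\QQQ^4)$ together with the fan $\Sigma_0 = \faces(\sigma)$ of the canonical toric ambient variety $Z_0$ of $X_0$. Then I would perform the tropical step of Construction~\ref{con:trop}: compute $\Trop(\b X_0 \cap (\KK^*)^4)$ by finitely many Gr\"obner basis computations, push it forward under $P_0$ to obtain $\Trop(X_0)$, and fix a fan structure $\Upsilon$ on it. Intersecting yields $\Sigma' = \{\sigma \cap \tau : \tau \in \Upsilon\}$, still a toric ambient fan of $X_0$ by \cite[Lem.~2.2]{Tev}; a stellar subdivision at suitable primitive vectors $v_1, \ldots, v_m$ then produces a regular fan $\Sigma$ and the toric resolution $Z \to Z_0$. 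Writing $P = [P_0, v_1, \ldots, v_m]$ and taking $Q$ to be a Gale dual of $P$ should reproduce the degree matrices listed in the theorem.

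With the modification fixed, the ideal $I$ of the proper transform is obtained by the recipe of \cite[Alg.~3.6]{HaKeLa}: pull $I_0$ back under $\overline{\pi}^{*}$ into the Laurent ring and saturate, $I = \overline{\pi}^{*}(I_0) : (T_1 \cdots T_r)^\infty$, with $r = 4 + m$. To certify that $R = \KT{r}/I$ is genuinely $\Cox(X)$, I would check that every variable $T_i$ is prime in $R$ and that $\dim(I) - \dim(\langle T_i, T_j \rangle + I) \geq 2$ for all $i \neq j$; normality of $R$ is then automatic from Lemma~\ref{lem:normal}. Finally I would run Algorithm~\ref{algo:issmooth} on the pair $(R, \Sigma)$. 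Since $X \to X_0$ is by construction a proper birational modification, smoothness of $X$ is exactly what upgrades it to a resolution, and the certified $R$ is then its Cox ring; matching the output against the table finishes each case.

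The hard part will be the $A_4$-quotient (case~7), the only irreducible representation on the list. Although $\Cox(X_0)$ is a hypersurface in all ten cases, the resolution in the $A_4$-case has a Cox ring with ten generators and nine relations, so the elimination, saturation and---above all---the verification of primality of the variables and of the codimension bound are markedly more expensive there. A difficulty common to every case is the choice of fan structure $\Upsilon$ on $\Trop(X_0)$ and of the subsequent stellar subdivision: an unfortunate choice may fail to give a regular fan, or may enlarge it needlessly so that the Gale dual no longer matches the compact degree matrices in the table, so some care (and computer experimentation) is needed to land on the subdivisions recorded above.
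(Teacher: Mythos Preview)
Your plan is correct for the nine cases $G\ne A_4$ and agrees with the paper's own proof there: one runs Algorithm~\ref{algo:resolve} on the data of Proposition~\ref{prop:coxX0}, checks primality of the variables and the codimension condition, and finishes with Algorithm~\ref{algo:issmooth}.

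For case~7, however, there is a genuine gap. You propose to feed Algorithm~\ref{algo:resolve} the four-variable presentation $\Cox(X_0)=\KT{4}/I_0$ from Proposition~\ref{prop:coxX0}, and you anticipate only computational expense. In fact the paper records (Example~\ref{ex:nonex}) that running Algorithm~\ref{algo:resolve} on exactly this presentation does \emph{not} produce a resolution: the resulting modification $X\to X_0$ fails the smoothness test, regardless of how the tropical fan structure and the subdivision are chosen. So the difficulty is not the size of the output but that the algorithm, applied to this input, simply does not terminate successfully.

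The missing idea is that the generating set of $\Cox(X_0)$ must first be enlarged so that it satisfies~\cite[Condition~3.6]{DobuMa} (cf.~\cite{Ya}); concretely one adjoins the three quadratic obstructions
\[
T_5=T_1^2-T_3T_4,\qquad T_6=T_3^2-T_1T_4,\qquad T_7=T_4^2-T_1T_3,
\]
obtaining a seven-variable presentation of $\Cox(X_0)$. Only with this re-embedding does the toric ambient modification method succeed, and even then the paper uses the brute-force variant Algorithm~\ref{algo:bruteforce} with three specific rays rather than Algorithm~\ref{algo:resolve} itself. Without this change of presentation your argument for the $A_4$-case would stall at the smoothness verification.
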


\begin{proof}
{\em Cases where $G\ne A_4$: }
 The Cox rings of the $X_0 = \KK^3/G$ have been presented in Proposition~\ref{prop:coxX0}. We then obtain the Cox ring of a resolution $X\to X_0$ using our implementation~\ref{sec:implementation} of Algorithm~\ref{algo:resolve}.
  The verifications of the primality of the variables $T_i$ are done directly computationally, the smoothness tests are done with Algorithm~\ref{algo:issmooth} as implemented in~\texttt{MDSpackage}~\cite{HaKe}.
  
  {\em Case $G=A_4$: }
To obtain the Cox ring of a resolution of the quotient singularity for $G = A_4$ one needs to change the presentation of the Cox ring of $\KK^3/G$. The one shown in Proposition~\ref{prop:coxX0} comes from the generating set of $\KK[x, y, z]^{[G,G]}$
$$T_1 = x^2+y^2+z^2, \quad T_2 = xyz, \quad T_3 = x^2+\varepsilon y^2 + \varepsilon^2 z^2, \quad T_4 = x^2+\varepsilon^2 y^2 + \varepsilon z^2$$
composed of eigenvectors of $G/[G,G]$ action (where $\varepsilon$ is the 3rd root of unity). 
For the resolution algorithm to work (compare Example~\ref{ex:nonex}), 
we want the generating set to satisfy~\cite[Condition~3.6]{DobuMa}, developed in the more general context including the present case in~\cite{Ya}. Roughly speaking, it determines when a generating set of $\Cox(X_0)$ is suitable for being extended to the Cox ring of its resolution. 
Thus we add to the generating set elements
$$
T_5\, =\, T_1^2 - T_3T_4, \qquad 
T_6\, =\, T_3^2-T_1T_4, 
\qquad T_7 \,=\, T_4^2-T_1T_3,
$$
which are obstructions for this condition for the original generating set. 
We obtain 
a presentation $\Cox(X_0) = \KT{7}/I_0$ 
where generators for $I_0$ and the $\ZZ/3\ZZ$-grading matrix are
\begingroup
\setlength{\tabcolsep}{2pt}
\arraycolsep=3.2pt
\footnotesize
\begin{gather*}
T_4T_5+T_3T_6+T_2T_7,\quad
T_3T_5+T_2T_6+T_4T_7,\quad
T_3^2-T_2T_4-T_7,\\
T_2T_3-T_4^2+T_6,\quad 
T_2^2-T_3T_4-T_5,\quad
27T_1^2-T_2T_5-T_4T_6-T_3T_7,\\
\left[\mbox{\tiny $
\begin{array}{rrrrrrr}
\b 0 & \b 0 & \b 2 & \b 1 & \b 0 & \b 2 & \b 1
\end{array}
$}\right].
\end{gather*}
\endgroup

The ring in the table is obtained via toric ambient modification from the Cox ring of $\KK^3/G$, presented as above.
More precisely, we apply Algorithm~\ref{algo:bruteforce} (to be explained in Subsection~\ref{subsection:smallres}) 
with the choice of the three vectors
$$
v_1\, :=\, (0, 0, 2, 1, 0, 1, 1),\qquad
v_2\, :=\, (0, 0, 2, 1, 0, 1, 2)\qquad
v_3\, :=\, (1, 0, 3, 1, 1, 2, 3).
$$
All the necessary verifications,
in particular the smoothness test, are successful.  
\end{proof}

\begin{example}[Non-example]\label{ex:nonex}
Note that for $G=A_4$, 
if we apply Algorithm~\ref{algo:resolve} 
directly to the presentation of $\Cox(X_0)$  given in case $7$ of Proposition~\ref{prop:coxX0},
we only obtain the Cox ring of a modification $X\to X_0$
where $X$ is not smooth.
More precisely, it computes the $\ZZ^8$-graded Cox ring and degree matrix

\begingroup
\setlength{\tabcolsep}{2pt}
\arraycolsep=3.2pt
\footnotesize
\begin{longtable}{cc}
$\begin{array}{c}
 \KT{12}/I \text{ with $I$ gen.~by }
 \\
 T_{3}^3T_{6}^2T_{7}T_{8}^2T_{11}+T_{4}^3T_{6}T_{7}^2T_{9}^2T_{12}\\
 -3T_{1}T_{3}T_{4}T_{6}T_{7}T_{8}T_{9}T_{11}T_{12}+T_{1}^3T_{8}T_{9}T_{11}^2T_{12}^2\\
 -27T_{2}^2T_{10}
\end{array}
$
&
 $\left[\mbox{\tiny $
\begin{array}{rrrrrrrrrrrr}
-2 & -3 & -2 & -2 & 1 & 0 & 0 & 0 & 0 & 0 & 0 & 0 \\
0 & 0 & -2 & -1 & 0 & 3 & 0 & 0 & 0 & 0 & 0 & 0 \\
0 & 0 & 1 & 0 & 0 & -2 & 1 & 0 & 0 & 0 & 0 & 0 \\
-1 & -1 & 0 & 0 & 0 & -2 & 0 & 1 & 0 & 0 & 0 & 0 \\
-1 & -1 & 2 & 0 & 0 & -4 & 0 & 0 & 1 & 0 & 0 & 0 \\
-1 & -2 & 1 & 0 & 0 & -3 & 0 & 0 & 0 & 1 & 0 & 0 \\
-2 & -2 & 1 & 0 & 0 & -4 & 0 & 0 & 0 & 0 & 1 & 0 \\
-2 & -2 & 2 & 0 & 0 & -5 & 0 & 0 & 0 & 0 & 0 & 1
\end{array}
$}\right]$.
\end{longtable}
\endgroup
\qed
\end{example}

\begin{remark}
 Using these methods and its implementation, one can directly go to higher group orders~$|G|$. Since the number of different representations to consider grows quickly for isomorphism types of~$G$ with higher order, we end the table at order~$12$.
\end{remark}

\subsection{Properties of quotients and their resolutions}\label{subsect:properties-resolutions}
We discuss certain geometric properties of the quotient singularities and their resolutions corresponding to the Cox rings from Theorem~\ref{thm:dim3}. At first, we turn to torus actions on quotient spaces: we consider the relation between the form of the Cox ring and the existence of a torus action on~$\KK^n/G$.

\begin{proposition}
Let $V$ be an affine space which is a direct sum of $n$ representations of a given group $G$. Then $V$ admits an action of $(\KK^*)^n$ which commutes with the action of~$G$.
\end{proposition}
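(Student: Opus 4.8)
The plan is to exploit the given direct-sum decomposition directly: write $V = V_1 \oplus \cdots \oplus V_n$, where each $V_i$ is a representation of $G$, and let $(\KK^*)^n$ act by independently rescaling the summands. Concretely, I would define the action
$$
(\KK^*)^n \times V \to V,\qquad
(t_1,\ldots,t_n)\cdot(v_1,\ldots,v_n) \ :=\ (t_1 v_1,\ldots,t_n v_n),
$$
where $v_i \in V_i$. This is visibly a group action: the identity $(1,\ldots,1)$ acts trivially, and composition corresponds to componentwise multiplication in $(\KK^*)^n$. It is also algebraic, being linear in the $v_i$ and polynomial in the $t_i$ (on the open torus, the inverse is a regular function as well), so it defines a genuine action of the torus on the affine space $V$.

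The heart of the matter is commutativity with the $G$-action, and here the key point I would isolate is that $G$ \emph{preserves each summand} $V_i$, because $V_i$ is by hypothesis a subrepresentation. Thus for $g \in G$ we have $g\cdot(v_1,\ldots,v_n) = (g\cdot v_1,\ldots,g\cdot v_n)$ with $g\cdot v_i \in V_i$. Since $g$ acts $\KK$-linearly on each $V_i$, scalar multiplication commutes with it: $g\cdot(t_i v_i) = t_i\,(g\cdot v_i)$. Applying this in every coordinate shows
$$
g\cdot\bigl((t_1,\ldots,t_n)\cdot(v_1,\ldots,v_n)\bigr)
\ =\
(t_1,\ldots,t_n)\cdot\bigl(g\cdot(v_1,\ldots,v_n)\bigr),
$$
which is exactly the assertion that the two actions commute.

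I expect the only genuine obstacle to be conceptual rather than technical: one must use that the decomposition is a decomposition into $G$-\emph{subrepresentations} (so that $G$ respects the grading by summands), rather than an arbitrary vector-space direct sum; if $G$ mixed the factors, the diagonal torus would no longer commute with it. Once this is pinned down, everything reduces to the elementary fact that scalar multiplication is central among linear endomorphisms of a vector space, so no nontrivial computation remains. I would therefore keep the write-up short, emphasizing the definition of the torus action and the one-line verification of commutativity via linearity.
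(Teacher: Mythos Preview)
Your proposal is correct and follows essentially the same approach as the paper: define the $(\KK^*)^n$-action by independently rescaling the summands of the decomposition $V = V_1 \oplus \cdots \oplus V_n$ and observe that it commutes with $G$ because $G$ acts linearly on each $V_i$. In fact you give more detail than the paper, which simply writes down the action in coordinates and asserts commutativity.
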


\begin{proof}
Let $(x_{i,1}\ldots,x_{i,n_i})$ be the coordinates of the $i$-th representation (of dimension~$d_i$).
The following action commutes with the action of~$G$.
\begin{gather*}
(\KK^*)^n\,\times\, V\,\to\,V,
\qquad
\begin{array}{cl}
&(t_1,\ldots,t_n)\cdot(x_{1,1}\ldots,x_{1,d_1},\ldots,x_{n,1}\ldots,x_{n,d_n})
\\
= &(t_1x_{1,1}\ldots,t_1x_{1,d_1},\ldots,t_nx_{n,1}\ldots,t_nx_{n,d_n})
\end{array}
\end{gather*}
\end{proof}

\begin{corollary}
A quotient of $\KK^3$ by a direct sum of a $2$-dimensional and a $1$-dimensional representation is a $T$-variety of complexity one.
\end{corollary}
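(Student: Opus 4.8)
The plan is to read the statement off the preceding Proposition. Write $\KK^3 = W_1 \oplus W_2$ with $W_1$ the given $2$-dimensional $G$-representation and $W_2$ the $1$-dimensional one, so that we are in the situation of the Proposition with $n = 2$, $d_1 = 2$ and $d_2 = 1$. The Proposition then furnishes an action of $(\KK^*)^2$ on $\KK^3$ commuting with the $G$-action: the factor $t_1$ scales the two coordinates of $W_1$ and $t_2$ scales the coordinate of $W_2$. Because this torus action commutes with $G$, it descends to an action of $(\KK^*)^2$ on the quotient $X_0 := \KK^3/G$.

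Two properties of $X_0$ are immediate and I would record them first: $X_0$ is normal, being $\Spec$ of the integrally closed invariant ring $\KK[x_1,x_2,x_3]^G$, and it has dimension $3$ since $G$ is finite. Thus it suffices to produce an \emph{effective} action of a $2$-dimensional torus, for then the complexity equals $\dim X_0 - 2 = 1$. The key step is therefore to check that the descended $(\KK^*)^2$-action on $X_0$ stays $2$-dimensional, i.e. that its image in $\Aut(X_0)$ is again a $2$-torus. I would argue this by a dimension count on orbits: the quotient map $q\colon \KK^3 \to X_0$ is finite (as $\KK[x_1,x_2,x_3]$ is integral over its ring of invariants) and equivariant, so for the point $p = (1,1,1)$, whose $(\KK^*)^2$-orbit $O$ is $2$-dimensional, the image $q(O) = (\KK^*)^2 \cdot q(p)$ is a single torus orbit in $X_0$ of the same dimension $2$, using that $q$ has finite fibres. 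Hence the torus acting effectively on $X_0$ has a $2$-dimensional orbit, and since it is a quotient of $(\KK^*)^2$ its orbits cannot exceed dimension $2$; so the effective torus is exactly $2$-dimensional.

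The main obstacle is precisely this effectiveness point: a priori the finite group $G$ could absorb a subtorus of $(\KK^*)^2$ and collapse it to a $1$-torus on the quotient, which would only yield complexity two. The orbit-dimension argument rules this out, equivalently showing that the kernel of $(\KK^*)^2 \to \Aut(X_0)$ is finite. Once this is established, the count $\dim X_0 - \dim T = 3 - 2 = 1$ identifies the normal variety $X_0$ as a $T$-variety of complexity one, which completes the proof.
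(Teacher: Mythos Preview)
Your proof is correct and follows the paper's intended route: the paper states this Corollary immediately after the Proposition and gives no proof at all, leaving the deduction as obvious. You supply precisely the details the paper suppresses, in particular the effectiveness of the descended $(\KK^*)^2$-action via an orbit-dimension argument; this is a genuine point the paper simply takes for granted.
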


\begin{remark}
\label{rem:cplx1}
All representations in the table in Proposition~\ref{prop:reps} except the case of $A_4$ are direct sums of two irreducible representations, as it can be easily seen from the generating matrices. In particular, all varieties $X_0$ in Proposition~\ref{prop:coxX0} except possibly for the $A_4$ case are $T$-varieties of complexity one.
\end{remark}

Thus, the only $3$-dimensional quotient singularities which are possibly not $T$-varieties of complexity one correspond to irreducible representations.
The representation of $A_4$ in Proposition~\ref{prop:reps} is irreducible and we expect that there is no $(\KK^*)^2$ action.

Note that although we do not give a proof that for 3-dimensional quotient singularities with the $(\KK^*)^2$-action (as in Remark~\ref{rem:cplx1}) Algorithm~\ref{algo:resolve} will always compute the Cox ring of a resolution and that it will have just one trinomial relation, it  seems natural to predict such behaviour. This is because of similar results in slightly different settings. In~\cite[Thm~3.4.4.9]{ArDeHaLa} the authors construct the Cox ring for any complete $T$-variety of complexity one and determine its single relation. Recently, an analogous description of the Cox rings of affine $\KK^*$-surfaces was provided in~\cite[Sect.~3]{HaWro}.

\begin{remark}
By the previous discussion, it would be interesting to continue the list of results in Theorem~\ref{thm:dim3}  with irreducible, $3$-dimensional representations. However, the smallest cases of such representations (one of order $21$, two of order $24$ and two of order $27$) are at the moment computationally out of reach on our machines. 
\end{remark}

We now turn to properties of the resolutions found in Theorem~\ref{thm:dim3}.
Recall that a resolution of singularities $\pi \colon X \rightarrow X_0$ is called {\em crepant} if $K_X = \pi^*K_{X_0}$. For surface quotient singularities for $G \subseteq \SL(2)$, i.e. du Val singularities, crepant resolutions are the minimal ones: the special fiber is a tree of smooth rational curves dual to a Dynkin diagram $A_n$, $D_n$, $E_6$, $E_7$ or $E_8$. The relation between the structure of~$G$ (its conjugacy classes or irreducible representations) and the shape of the diagram of the resolution of $\KK/G$ was noticed by McKay.
The postulated relation between the geometry of crepant resolutions of (Gorenstein) quotient singularities and the structure of the group is called the {\em McKay correspondence}. It has been studied in several special cases and in different formulations. In particular, it is proved in dimension 3, see e.g.~\cite{ItoReid}, for symplectic singularities in dimension 4, see~\cite{KaledinMcKay}, and a weak version for any $G \subset \SL(n)$ (the equality of the dimension of cohomology space and the number of conjugacy classes of $G$) is due to Batyrev~\cite{BatyrevMcKay}.

It is a natural question to ask how good the resolutions obtained using Algorithm~\ref{algo:resolve} are and what properties to expect of them. In particular, we would like to know whether they are crepant for $G \subset \SL(n)$. To test this property for the $3$-dimensional results in Theorem~\ref{thm:dim3} we can use the McKay correspondence.

\begin{remark}
Among groups listed in Proposition~\ref{prop:reps} the cases $1$, $2$, $3$, $5$, $6$, $7$ and $8$ are in $\SL(3)$.
\end{remark}

\begin{proposition}\label{prop:crepant}
In Theorem~\ref{thm:dim3}, 
the Cox rings of resolutions of singularities in cases 1 ($S_3$) and 5 ($D_{10}$), are not the Cox rings of crepant resolutions. The resolutions in cases 2 ($D_8$), 3 ($Q_8$), 6 ($D_{12}$), 7 ($A_4$)
and 8 ($BD_3$) are crepant.
\end{proposition}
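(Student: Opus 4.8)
The plan is to reduce crepancy to a single class-group computation. For the seven cases lying in $\SL(3)$ (numbers $1,2,3,5,6,7,8$) the singularity $X_0=\KK^3/G$ is Gorenstein with trivial canonical class, so I fix a nowhere-vanishing volume form and take $K_{X_0}=0$. Writing $\pi\colon X\to X_0$ for the computed resolution and $E_1,\ldots,E_m$ for its exceptional prime divisors — these being exactly the divisors $V(X;T_{s+1}),\ldots,V(X;T_r)$ attached to the rays $v_1,\ldots,v_m$ added in the toric step — the discrepancy formula reads $K_X=\sum_{i=1}^m a_iE_i$ with all $a_i\geq 0$, so $X$ is crepant if and only if every $a_i$ vanishes. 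Because $X_0$ is affine with finite class group, the $m$ classes $[E_i]=\deg(T_{s+i})$ form a $\QQ$-basis of $\Cl(X)\otimes_\ZZ\QQ$ (indeed $\rank\Cl(X)=m$), so the $a_i$ are the unique solution of $[K_X]=\sum a_i[E_i]$. Hence, given $K_{X_0}=0$, one has the equivalence
\[
X \text{ is crepant} \quad\Longleftrightarrow\quad [-K_X]=0\in\Cl(X),
\]
and $[-K_X]$ can be read off directly from Theorem~\ref{thm:dim3}.

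For the six hypersurface cases ($G\neq A_4$), $X$ is a hypersurface $\KT{r}/(g)$ in its smooth canonical toric ambient, so adjunction gives $[-K_X]=\sum_{i=1}^r\deg(T_i)-\deg(g)\in\Cl(X)=\ZZ^{\rho}$. I would evaluate this using the displayed degree matrices and relations, by summing the columns of the degree matrix and subtracting the (common) degree of any monomial of $g$. This yields $[-K_X]=0$ in cases $2,3,6,8$, proving crepancy, and a nonzero class in cases $1$ and $5$ (for instance $(-1,0,0,-1)$ for $S_3$ and $(-1,0,0,0,-1)$ for $D_{10}$), which proves non-crepancy since crepancy would force $[-K_X]=[\pi^*K_{X_0}]=0$.

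The case $G=A_4$ is the main obstacle, because there the Cox ring of $X$ carries nine relations and is not a complete intersection, so the convenient adjunction formula above is unavailable. Here I would argue through the McKay correspondence in dimension three~\cite{ItoReid}: the conjugacy classes of $A_4$ have ages $0,1,1,1$, so a crepant resolution of $\KK^3/A_4$ has exactly three exceptional divisors, one per junior class, and Euler number $4$. The computed $X$ has precisely three exceptional divisors; to upgrade this numerical match to genuine crepancy I would compute the three discrepancies directly inside the smooth toric ambient $Z$ via $K_X=(K_Z+X)|_X$, equivalently checking that the added rays $v_1,v_2,v_3$ lie on the age-one hyperplane, or else invoke that the generating set used for $A_4$ was chosen to satisfy the crepancy condition~\cite[Condition~3.6]{DobuMa} (see also~\cite{Ya}), which forces the proper transform to be crepant.

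Finally, as a uniform cross-check I would recompute the junior-class counts for all seven groups and compare them with the number $m$ of exceptional divisors in Theorem~\ref{thm:dim3}: the counts match (with $m$ equal to the junior number) in the crepant cases $2,3,6,7,8$ and are strictly exceeded (with $m$ larger than the finite number of crepant divisors over $X_0$) in cases $1$ and $5$, independently confirming both directions of the statement.
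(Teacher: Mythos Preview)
Your approach is correct and, for the six hypersurface cases, genuinely different from the paper's. You compute $[-K_X]=\sum_i\deg(T_i)-\deg(g)$ directly from the degree matrices and find it vanishes exactly in cases $2,3,6,8$; this is a clean, self-contained computation that avoids invoking the McKay correspondence altogether. The paper instead argues uniformly via McKay: it counts junior conjugacy classes, compares with $\rank\Cl(X)$, and uses that every crepant exceptional divisor over $X_0$ must appear on \emph{any} resolution (\cite[2.3]{ItoReid}) to conclude that equality of these two numbers forces all discrepancies to vanish. Your method is more elementary where it applies (no McKay input needed); the paper's method is more uniform and handles all seven cases, including $A_4$, by the same mechanism.

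Your treatment of the $A_4$ case is the weak point. The first option you offer---computing discrepancies in the toric ambient or checking that $v_1,v_2,v_3$ lie on an ``age-one hyperplane''---is only sketched, not carried out. The second option is a misreading: \cite[Condition~3.6]{DobuMa} governs when a generating set of $\Cox(X_0)$ lifts to a generating set of the Cox ring of a resolution, not crepancy of the resolution per se, so invoking it as a ``crepancy condition'' does not close the argument. What actually saves your proof for $A_4$ is the final ``cross-check'' paragraph, which is precisely the paper's argument: three junior classes, three exceptional divisors, and every crepant divisor appears on any resolution, hence all three exceptional divisors are crepant. You should promote that from a cross-check to the main argument for case~$7$ and state the key input (that crepant exceptional divisors are unavoidable) explicitly.
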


\begin{proof}
By~\cite[Lem.~2.11]{81resolutions}, the class group $\Cl(X)$ of a resolution $\pi \colon X \rightarrow \CC^n/G$ of a quotient singularity is a free group and its rank~$m$ is equal to the number of irreducible components of the exceptional divisor. By the McKay correspondence in dimension~$n=3$ we know that~$m$ is the number of conjugacy classes of junior elements in~$G$ where an element $g \in G$ is {\em junior} if 
$$
{\rm age}(g)\ =\ 1
\qquad\text{where}\qquad
{\rm age}(g)\ :=\ \frac{1}{r}\sum_{k=1}^n a_k
$$ 
with~$a_k$ coming from the exponents of eigenvalues $e^{\frac{2\pi i a_k}{r}}$ of~$g$, see e.g.~\cite{ReidBourbaki}. 
Thus the conjugacy classes of junior elements can be determined with simple computations (e.g. using \texttt{GAP}~\cite{GAP4}), and the results are as follows:

\begingroup
\footnotesize
\def\arraystretch{0.5}
\begin{longtable}{cccccccc}
\myhline
case number & $1$ & $2$ & $3$ & $5$ & $6$ & $7$ & $8$ \\
\myhline
junior classes & $2$ & $4$ & $4$ & $3$ & $5$ & $3$ & $5$\\
\myhline
\end{longtable}
\endgroup

Comparing them with the rank of $\Cl(X)$ given in Theorem~\ref{thm:dim3} we obtain that the resolutions in cases $1$ and $5$ have too many components of the exceptional divisor to be crepant.

In the remaining cases obtained resolutions are crepant: since any crepant divisor appears necessarily on any resolution (see e.g.~\cite[2.3]{ItoReid}), it is enough to show that the number of exceptional divisors is equal to the number of junior classes.
\end{proof}

Note that in dimension $3$ there is one Cox ring corresponding to all crepant resolutions, because they are all birationally equivalent and flops preserves smoothness in dimension~$3$.

\begin{remark}\label{rem:gitcones}
We can describe the geometry of all crepant resolutions for groups $2$, $3$, $6$, $7$ and $8$ of Proposition~\ref{prop:reps} by computing GIT quotients of the spectrum of their Cox rings. To determine the GIT fan describing the variation of the quotients, we use~\cite{Ke}.

\begingroup
\footnotesize
\def\arraystretch{0.5}
\begin{longtable}{cccccccc}
\myhline
case number & $2$ & $3$ & $6$ & $7$ & $8$\\
\myhline
max.~GIT-cones within $\mov(X)$ & $9$ & $1$ & $16$ & $5$ & $1$\\
\myhline
\end{longtable}
\endgroup

Actually, the cases 3 and 8 are products of a representation of a finite group in $\SL(2)$ and a 1-dimensional trivial representation, so the resolutions will be just products of minimal resolutions of du Val singularities by $\CC$. Hence, there is just one crepant resolution, i.e., just one chamber in the GIT fan restricted to the cone of movable divisor classes~$\Mov(X)$.
\end{remark}

\subsection{Smaller resolutions}
\label{subsection:smallres}
In Algorithm~\ref{algo:resolve},
we computed Cox rings of resolutions as proper transforms under toric resolutions. 
The latter was obtained by intersection of the tropical variety with the affine toric ambient variety of $X_0$.
This may yield unnecessarily many new rays and therefore bigger resolutions.
In this subsection, we shortly describe an immediate alternative version that works with subsets of these rays in a brute force manner.
A similar idea was used in~\cite[Sect.~3]{HaKe}.
We can then improve two resolutions of Theorem~\ref{thm:dim3}.

\begin{algorithm}
 \label{algo:bruteforce}
 {\em Input: } as in the first steps of Algorithm~\ref{algo:resolve}: the Cox ring $\KT{s}/I_0$ of $X_0$, $\sigma=P_0(\QQQ^s)$, a fan $\Upsilon$ with support $\Trop(X_0)$,  $\Sigma = \{\sigma\}\cap \Upsilon$.
 \begin{itemize}
 \item 
 Resolve $\Sigma$, i.e., determine a set $\mathcal V\subseteq \QQ^n$ of primitive vectors such that the stellar subdivision of $\Sigma$ at $\mathcal V$ is regular.
  \item 
  Insert into $\mathcal V$ primitive generators for the rays of $\Sigma$ that are not rays of~$\sigma$.
  \item For each $k=1,\ldots,|\mathcal V|$, do:
  \begin{itemize}
   \item 
   For each $v_{i_1},\ldots,v_{i_k}\in\mathcal V$, do:
   \begin{itemize}
      \item Perform the steps of Algorithm~\ref{algo:resolve} starting from line~$6$ with $v_1,\ldots,v_m$ replaced by $v_{i_1},\ldots,v_{i_k}$ and $\Sigma$ replaced by the stellar subdivision of $\sigma$ at these vectors. 
      Stop, if the verification is positive.
      \end{itemize}
  \end{itemize}
 \end{itemize}
 {\em Output (if provided): } the Cox ring $\Cox(X)=\KT{r}/I$ of a resolution $X\to X_0$.
\end{algorithm}

\begin{proposition}\label{prop:3dimcrepant}
In the setting of Theorem~\ref{thm:dim3},
the following are Cox rings of crepant resolutions.

\begingroup
\setlength{\tabcolsep}{2pt}
\arraycolsep=1.2pt
\footnotesize
\def\arraystretch{0.5}
\begin{longtable}{ccccc}
\myhline
No.  & $G$ & $Cl(X)$ & degree matrix & $\Cox(X)$\\
\myhline
\\
$1$ & $S_3$ & $\ZZ^2$ & $\left[\mbox{\tiny $
\begin{array}{rrrrrr}
1 & 1 & 0 & 0 & -2 & 0 \\
0 & -1 & -1 & -1 & 0 & 1
\end{array}
$}\right]$  
& 
$\begin{array}{c}
 \KT{6}/I \text{ with $I$ gen.~by }
 \\
4T_3^3T_6-T_2^2T_5-27T_4^2
\end{array}
$
\\\\\hline \\
$5$ & $D_{10}$ & $\ZZ^3$ & $\left[\mbox{\tiny $
\begin{array}{rrrrrrr}
    1 & 1 & 0 & 0 & -2 & 0 & 0 \\
    0 & -1 & -1 & -1 & 0 & 1 & 0 \\
    0 & -1 & 0 & -1 & 0 & -1 & 1
    \end{array}
$}\right]$  
& 
$\begin{array}{c}
 \KT{7}/I \text{ with $I$ gen.~by }
 \\
 {4}T_{3}^{5}T_{6}^{3}T_{7}+T_{2}^{2}T_{5}-T_{4}^{2}
\end{array}
$
\\\\\myhline \\
\end{longtable}
\endgroup
\end{proposition}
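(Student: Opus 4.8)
The plan is to exhibit these two rings as outputs of Algorithm~\ref{algo:bruteforce} and then to establish crepancy by a rank count, following the proof of Proposition~\ref{prop:crepant}.

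First I would run Algorithm~\ref{algo:bruteforce} on the presentations $\Cox(X_0) = \KT{4}/I_0$ from cases~$1$ and~$5$ of Proposition~\ref{prop:coxX0}. Concretely, one computes a set $\mathcal V$ of primitive vectors whose stellar subdivision resolves $\Sigma = \{\sigma\} \cap \Upsilon$, adjoins to $\mathcal V$ the primitive generators of the rays of $\Sigma$ that are not rays of $\sigma$, and then searches over subsets $v_{i_1}, \ldots, v_{i_k} \in \mathcal V$ of increasing size $k$, each time executing the steps of Algorithm~\ref{algo:resolve} from the Gale-dual computation onward. For a given subset the verification consists of checking, via~\cite[Alg.~3.6]{HaKeLa}, that the variables $T_i$ are prime and that $\dim(I) - \dim(\langle T_i, T_j \rangle + I) \geq 2$ for all $i \neq j$, so that the resulting $R$ is the Cox ring of the proper transform $X$, followed by a smoothness test of $X$ using Algorithm~\ref{algo:issmooth}. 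The search stops at the first subset producing a smooth $X$; this yields the two displayed rings together with their degree matrices (the Gale duals $Q$ of the enlarged matrices $P$).

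It then remains to verify crepancy, and here I would reuse the argument of Proposition~\ref{prop:crepant}. Both $S_3$ and $D_{10}$ lie in $\SL(3)$, so the McKay correspondence in dimension~$3$ applies. By~\cite[Lem.~2.11]{81resolutions} the rank of $\Cl(X)$ equals the number of irreducible components of the exceptional divisor, and for the rings above one reads off $\rk \Cl(X) = 2$ in case~$1$ and $\rk \Cl(X) = 3$ in case~$5$. These agree with the numbers of conjugacy classes of junior elements computed in Proposition~\ref{prop:crepant}, namely $2$ for $S_3$ and $3$ for $D_{10}$. Since every crepant divisor over $X_0$ must appear on any resolution (see~\cite[2.3]{ItoReid}) and, by the McKay correspondence, the number of such crepant divisors equals the number of junior classes, a resolution whose exceptional locus has exactly that many components can consist only of crepant divisors; hence both $X \to X_0$ are crepant.

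The main obstacle is the first step. Unlike the output of Algorithm~\ref{algo:resolve}, a proper subset of the resolving rays carries no a priori guarantee that the associated proper transform $X$ is smooth, so the search may have to examine many subsets, and for each candidate the primality and codimension conditions of~\cite[Alg.~3.6]{HaKeLa} must hold before smoothness can even be tested. Once a smooth $X$ with the correct number of exceptional components is found, crepancy is immediate from the rank count; the genuinely delicate part is the combinatorial search for an admissible subset of rays yielding a smaller, smooth model.
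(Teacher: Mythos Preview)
Your proposal is correct and follows essentially the same approach as the paper: apply Algorithm~\ref{algo:bruteforce} to the presentations from Proposition~\ref{prop:coxX0}, then verify crepancy by matching the rank of $\Cl(X)$ against the number of junior conjugacy classes as in Proposition~\ref{prop:crepant}. The paper's proof is slightly more explicit in that it records the actual primitive vectors at which the stellar subdivision is performed---$(1,1,0,0)$ and $(1,2,1,1)$ in case~$1$, and $(1,1,0,0)$, $(1,2,1,1)$, $(2,4,1,2)$ in case~$5$---rather than leaving the search implicit, but the logic is identical.
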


\begin{proof}
 In case $1$,  the fan $\Sigma_0$ 
  of the ambient affine toric variety has the maximal cone $\sigma$ generated by
 $$
 (1, 0, 0, 0),\quad (1, 2, 0, 0),\quad (0, 0, 1, 0),\quad (0, 0, 0, 1)\ \in \QQ^4.
 $$
 Algorithm~\ref{algo:bruteforce} then  stellarly subdivides $\Sigma_0={\rm fan}(\sigma)$ at
  $(1, 1, 0, 0)$ and $(1, 2, 1, 1)\in \QQ^4$. The verifications succeed.
Similarly, in case $5$, the cone $\sigma$ is generated by
$$
(1, 0, 0, 0),\quad (1, 2, 0, 0),\quad (0, 0, 1, 0),\quad (0, 0, 0, 1)\ \in\QQ^4.
$$
Again, Algorithm~\ref{algo:bruteforce}  stellarly subdivides $\Sigma_0$ at the vectors
  $(1, 1, 0, 0)$, $(1, 2, 1, 1)$ and $(2, 4, 1, 2)\in \QQ^4$.
  
 Both resolutions are crepant, because the number of exceptional divisors is the same as the number of junior conjugacy classes in $G$, compare the end of the proof of~\ref{prop:crepant}. 
\end{proof}

\begin{remark}
As in Remark~\ref{rem:gitcones}, we compute the variation of GIT-quotients for the two cases of Proposition~\ref{prop:3dimcrepant}:

\begingroup
\footnotesize
\def\arraystretch{0.5}
\begin{longtable}{ccc}
\myhline
case number & $1$ & $5$\\
\myhline
max.~GIT-cones within $\mov(X)$ & $2$ & $3$ \\
\myhline
\end{longtable}
\endgroup

\end{remark}

\begin{remark}
Applying Algorithm~\ref{algo:bruteforce} 
to the other cases of Theorem~\ref{thm:dim3}
does not yield better resolutions in the sense of smaller Picard number or fewer generators. Note that these cases are precisely the ones which are not in~$\SL(3)$.
\end{remark}

\section{Two $4$-dimensional examples}
\label{sec:4dimex}

In this section, we present two $4$-dimensional examples.
In dimension~4 much less is known about the resolutions of quotient singularities. In particular, crepant resolutions do not always exist, and the McKay correspondence has been proved just for the symplectic case by Kaledin, see~\cite{KaledinMcKay}. Hence it is a very appropriate setting for computational experiments with constructing resolutions via Cox rings.
Moreover, it is interesting also from the point of view of the Cox ring theory: while in dimension~3 all our cases except for one can be defined with a single trinomial relation, here the ring structure will be more complicated. 

The groups given below are complex symplectic, hence we work here with symplectic quotient singularities. Both examples have also been treated in~\cite{DobuMa} where the Cox rings of symplectic resolutions was constructed.

\begin{proposition}
\label{prop:4dim}
Consider the $4$-dimensional quotient singularities $\KK^4/G$ for the two small groups $G\subseteq \GL(4)$

\begingroup
\footnotesize
\def\arraystretch{0.5}
\begin{longtable}{cl}
\myhline
$G$ & gen.s in $\GL(4)$\\
\myhline\\
$S_3$ & $\left[\mbox{\tiny $
\begin{array}{rrrr}
0 & -1 & 0 & 0\\ 
1 & -1 & 0 & 0\\
0 & 0 & 0 & -1\\
0 & 0 & 1 & -1
\end{array}
$}\right],\quad 
 \left[\mbox{\tiny $
\begin{array}{rrrr}
-1 & 1 & 0 & 0\\ 
0 & 1 & 0 & 0\\
0 & 0 & -1 & 1\\
0 & 0 & 0 & 1
 \end{array}
 $}\right].
$
\\\\\hline\\
$D_8$ & 
$\left[\mbox{\tiny $
\begin{array}{rrrr}
0 & -1 & 0 & 0 \\
1 & 0 & 0 & 0 \\
0 & 0 & 0 & -1\\
0 & 0 & 1 & 0
 \end{array}
 $}\right],\quad 
 \left[\mbox{\tiny $
\begin{array}{rrrr}
1 & 0 & 0 & 0 \\
0 & -1 & 0 & 0 \\
0 & 0 & 1 & 0 \\
0 & 0 & 0 & -1
 \end{array}
 $}\right].
$
\\\\
\myhline
\end{longtable}
\endgroup
The Cox ring of the quotient space $X_0 := \KK^n/G$, the degree matrix and the class group of $X_0$ are as follows.

\begingroup
\footnotesize
\def\arraystretch{0.6}
\begin{longtable}{cccc}
\myhline
$G$  & \multicolumn{2}{c}{$Cl(X_0)$, degree matrix and $\Cox(X_0)$}\\
\myhline\\
$S_3$ &  $\ZZ/2\ZZ$ & $\left[\mbox{\tiny $
\begin{array}{rrrrrrrrrrrr}
\b 1 & \b 1 & \b 1 & \b 1 & \b 1 & \b 0 & \b 0 & \b 0 & \b 0 & \b 0 & \b 0 & \b 0
\end{array}
$}\right]$  
\\\\
\multicolumn{3}{c}{
$\begin{array}{cc}
 \KT{12}/I \text{ with $I$ gen.~by }
 \\[1ex]
3T_{9}T_{10}-T_{7}T_{11}+T_{6}T_{12}, &
3T_{5}T_{10}-2T_{4}T_{11}+T_{3}T_{12},\\
3T_{6}T_{8}+T_{9}T_{11}-T_{7}T_{12}, &
3T_{4}T_{8}-T_{2}T_{11}-2T_{5}T_{12},\\
3T_{3}T_{8}-3T_{2}T_{10}-T_{5}T_{11}-T_{4}T_{12}, &
T_{5}T_{7}-2T_{4}T_{9}+T_{1}T_{12},\\
T_{4}T_{7}-2T_{3}T_{9}+T_{1}T_{11}, &
T_{2}T_{7}-3T_{1}T_{8}+2T_{5}T_{9},\\
T_{5}T_{6}-T_{3}T_{9}+T_{1}T_{11}, &
2T_{4}T_{6}-T_{3}T_{7}+3T_{1}T_{10},\\
T_{2}T_{6}+T_{4}T_{9}-T_{1}T_{12}, &
T_{1}T_{5}+6T_{9}T_{11}-3T_{7}T_{12},\\
T_{4}^2-T_{3}T_{5}-3T_{11}^2+9T_{10}T_{12}, &
T_{2}T_{4}+T_{5}^2+9T_{8}T_{11}-3T_{12}^2,\\
T_{1}T_{4}+3T_{7}T_{11}-6T_{6}T_{12}, &
T_{2}T_{3}+T_{4}T_{5}+27T_{8}T_{10}-3T_{11}T_{12},\\
T_{1}T_{3}+9T_{7}T_{10}-6T_{6}T_{11}, &
T_{1}T_{2}+9T_{7}T_{8}-6T_{9}T_{12},\\
T_{1}^2+3T_{7}^2-12T_{6}T_{9}, &
4T_{9}^3-T_{2}^2-27T_{8}^2,\\
2T_{7}T_{9}^2+T_{2}T_{5}-9T_{8}T_{12}, &
4T_{6}T_{9}^2-T_{5}^2-3T_{12}^2,\\
2T_{1}T_{9}^2-9T_{5}T_{8}-3T_{2}T_{12}, &
T_{7}^2T_{9}-4T_{6}T_{9}^2-9T_{8}T_{11}+3T_{12}^2,\\
2T_{6}T_{7}T_{9}-T_{4}T_{5}-3T_{11}T_{12}, &
T_{1}T_{7}T_{9}-3T_{2}T_{11}-3T_{5}T_{12},\\
4T_{6}^2T_{9}-T_{3}T_{5}-6T_{11}^2+9T_{10}T_{12}, &
2T_{1}T_{6}T_{9}+3T_{5}T_{11}-3T_{4}T_{12},\\
T_{7}^3-4T_{6}T_{7}T_{9}-27T_{8}T_{10}+3T_{11}T_{12}, &
T_{6}T_{7}^2-4T_{6}^2T_{9}+3T_{11}^2-9T_{10}T_{12},\\
T_{1}T_{7}^2-4T_{1}T_{6}T_{9}-9T_{3}T_{8}-3T_{5}T_{11}+6T_{4}T_{12}, &
2T_{6}^2T_{7}-T_{3}T_{4}-9T_{10}T_{11},\\
T_{1}T_{6}T_{7}+3T_{4}T_{11}-3T_{3}T_{12}, &
4T_{6}^3-T_{3}^2-27T_{10}^2,\\
2T_{1}T_{6}^2+9T_{4}T_{10}-3T_{3}T_{11} &
\end{array}
$
}
\\\\\hline\\
$D_8$ &  $\ZZ/2\ZZ\times \ZZ/2\ZZ$ & $\left[\mbox{\tiny $
\begin{array}{rrrrrrrrrr}
\b 1 & \b 1 & \b 1 & \b 1 & \b 1 & \b 1 & \b 0 & \b 0 & \b 0 & \b 0\\
\b 1 & \b 1 & \b 1 & \b 0 & \b 0 & \b 0 & \b 1 & \b 0 & \b 0 & \b 0
\end{array}
$}\right]$  
\\\\
\multicolumn{3}{c}{
$\begin{array}{cc}
 \KT{10}/I \text{ with $I$ gen.~by }
 \\[1ex]
T_{5}T_{8}+T_{4}T_{9}-2T_{6}T_{10} &
T_{3}T_{8}+T_{1}T_{9}-T_{2}T_{10},\\
T_{7}^2-T_{8}T_{9}+T_{10}^2,&
T_{6}T_{7}+2T_{1}T_{9}-T_{2}T_{10},\\
T_{5}T_{7}+T_{2}T_{9}-2T_{3}T_{10},&
T_{4}T_{7}-T_{2}T_{8}+2T_{1}T_{10},\\
2T_{3}T_{7}-T_{6}T_{9}+T_{5}T_{10},&
T_{2}T_{7}-T_{4}T_{9}+T_{6}T_{10},\\
2T_{1}T_{7}+T_{6}T_{8}-T_{4}T_{10},&
T_{4}T_{5}-T_{6}^2+T_{8}T_{9}-T_{10}^2,\\
T_{2}T_{5}-2T_{3}T_{6}+T_{7}T_{9},&
2T_{1}T_{5}-T_{2}T_{6}+T_{7}T_{10},\\
T_{3}T_{4}-T_{1}T_{5}-T_{7}T_{10},&
T_{2}T_{4}-2T_{1}T_{6}-T_{7}T_{8},\\
4T_{3}^2+T_{5}^2-T_{9}^2,&
2T_{2}T_{3}+T_{5}T_{6}-T_{9}T_{10},\\
4T_{1}T_{3}+T_{6}^2-T_{10}^2,&
T_{2}^2+T_{6}^2-T_{8}T_{9},\\
2T_{1}T_{2}+T_{4}T_{6}-T_{8}T_{10},&
4T_{1}^2+T_{4}^2-T_{8}^2
\end{array}
$
}

\\\\
\myhline
\end{longtable}
\endgroup
\end{proposition}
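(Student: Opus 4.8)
The plan is to apply Algorithm~\ref{algo:coxquotsing} to each of the two groups $G$, exactly as in the proof of Proposition~\ref{prop:coxX0}, and then to certify the resulting presentations with the \texttt{Singular} implementation of Section~\ref{sec:implementation}. Recall from~\cite[Thm~3.1]{ArGa} that $\Cox(X_0)\cong\KK[S_1,\ldots,S_4]^{[G,G]}$, graded by $\XX(G')=\Cl(X_0)$ with $G':=G/[G,G]$; so the first step in each case is to pin down the derived subgroup $[G,G]$ and the abelianization $G'$, and only afterwards to determine invariants, relations and weights.

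For $G=S_3$ one checks that both generating matrices are block diagonal with two identical $2\times 2$ blocks, so that $\KK^4=V\oplus V$ with $V$ the standard representation; the first generator has order three and generates $[S_3,S_3]=A_3\cong\ZZ/3\ZZ$, while the second is an involution generating $G'\cong\ZZ/2\ZZ$. This already yields $\Cl(X_0)=\ZZ/2\ZZ$. Diagonalizing the order-three generator gives coordinates $x_1,y_1,x_2,y_2$ in which a monomial is $A_3$-invariant precisely when its total $x$-degree and its total $y$-degree agree modulo three; a minimal generating set then consists of the four quadratic invariants $x_iy_j$ together with the four pure $x$-cubics and the four pure $y$-cubics, matching the twelve generators $T_1,\ldots,T_{12}$.

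For $G=D_8$ the representation is likewise $\KK^4=W\oplus W$ with $W$ the standard two-dimensional representation, and one checks that $[D_8,D_8]=\langle-\id\rangle$ acts as $-\id$ on all of $\KK^4$. Hence $\KK[S_1,\ldots,S_4]^{[D_8,D_8]}$ is the second Veronese subalgebra, minimally generated by the ten quadratic monomials $S_iS_j$ (matching $T_1,\ldots,T_{10}$), and $G'\cong(\ZZ/2\ZZ)^2$ gives $\Cl(X_0)=(\ZZ/2\ZZ)^2$.

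In both cases I would then carry out Construction~\ref{con:GGhomog}: decompose each fixed-degree piece of the invariant ring into $G'$-eigenspaces, replace the raw invariants by the resulting $G'$-homogeneous generators $g_1,\ldots,g_s$, and read off the eigenvalues $\chi_{ij}$ to obtain the displayed degree matrices. The relation ideal is then the kernel $I_0$ of the map $T_i\mapsto g_i$, computed by eliminating $S_1,\ldots,S_4$ from $\langle T_i-g_i\rangle$, which makes the output automatically complete. I expect the main obstacle to be precisely this elimination for $G=S_3$: it takes place in a sixteen-variable ring and produces over thirty generators, so organizing and verifying the full $I_0$ together with its $\XX(G')$-grading is the computationally heavy step. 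The $D_8$ case is lighter, its defining ideal being the $G'$-twisted image of the familiar Veronese syzygies among the $S_iS_j$.
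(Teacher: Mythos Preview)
Your proposal is correct and follows essentially the same approach as the paper, which simply states that the result is an application of Algorithm~\ref{algo:coxquotsing}. Your additional structural analysis of $[G,G]$, $G'$, and the invariant rings is accurate and makes explicit what the algorithm does internally, but it is not a different method.
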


\begin{proof}
 This is again an application of Algorithm~\ref{algo:coxquotsing}. 
\end{proof}

We now compute the Cox rings of a resolution for the two cases 
$X_0=\KK^4/S_3$ and $X_0=\KK^4/D_8$ presented in the previous proposition.
In Proposition~\ref{prop:4dim1}, we present a resolution for the $D_8$-case thereby retrieving a result of Grab and the first author~\cite[Prop.~5.15]{DobuMa}; Proposition~\ref{prop:4dimS3} treats the case of~$S_3$.

\begin{remark}
In the case of $G=D_8$ in Proposition~\ref{prop:4dim},
applying Algorithm~\ref{algo:resolve} to $X_0=\KK^4/G$ without changes, we obtain a modification $X\to X_0$ with $\Cl(X)=\ZZ^{16}$
and the Cox ring $\Cox(X) = \KT{26}/I$ with $28$ generators for $I$.
Due to this size, we could not verify smoothness on our machines.  
Similarly, the smoothness tests in the case $G=S_3$ were computationally infeasible.
\end{remark}

It turns out, that after some changes in Algorithm~\ref{algo:bruteforce}, we obtain the Cox rings of symplectic resolutions for $S_3$ and~$D_8$. Note that for a symplectic quotient singularity a crepant resolution and a symplectic resolution is the same, see~\cite[Thm~2.5]{Verbitsky_crepant}.

\begin{proposition}\label{prop:4dim1}
In the case of $G=D_8$ in Proposition~\ref{prop:4dim},
applying a variant of Algorithm~\ref{algo:resolve} to $X_0=\KK^4/G$, 
we obtain a resolution $X\to X_0$ with $\Cl(X)=\ZZ^{2}$
and the Cox ring is $\Cox(X) = \KT{12}/I$ where generators for $I$ and the degree matrix are

\begin{center}
\allowdisplaybreaks
\footnotesize
$
\begin{array}{rl}
T_{5}T_{8}+T_{4}T_{9}-2T_{6}T_{10}, &
T_{3}T_{8}+T_{1}T_{9}-T_{2}T_{10},\\
T_{2}T_{5}-2T_{3}T_{6}+T_{7}T_{9}, &
2T_{1}T_{5}-T_{2}T_{6}+T_{7}T_{10},\\
2T_{3}T_{4}-T_{2}T_{6}-T_{7}T_{10}, &
T_{2}T_{4}-2T_{1}T_{6}-T_{7}T_{8},\\
T_{7}^2T_{12}-T_{2}^2+4T_{1}T_{3}, &
T_{6}T_{7}T_{12}+2T_{1}T_{9}-T_{2}T_{10},\\
T_{5}T_{7}T_{12}+T_{2}T_{9}-2T_{3}T_{10}, &
T_{4}T_{7}T_{12}-T_{2}T_{8}+2T_{1}T_{10},\\
T_{4}T_{5}T_{12}-T_{6}^2T_{12}+T_{8}T_{9}-T_{10}^2, &
T_{7}^2T_{11}+T_{4}T_{5}-T_{6}^2,\\
2T_{3}T_{7}T_{11}-T_{6}T_{9}+T_{5}T_{10}, &
T_{2}T_{7}T_{11}-T_{4}T_{9}+T_{6}T_{10},\\
2T_{1}T_{7}T_{11}+T_{6}T_{8}-T_{4}T_{10}, &
4T_{3}^2T_{11}+T_{5}^2T_{12}-T_{9}^2,\\
2T_{2}T_{3}T_{11}+T_{5}T_{6}T_{12}-T_{9}T_{10}, &
4T_{1}T_{3}T_{11}+T_{4}T_{5}T_{12}+T_{8}T_{9}-2T_{10}^2,\\
2T_{1}T_{2}T_{11}+T_{4}T_{6}T_{12}-T_{8}T_{10}, &
4T_{1}^2T_{11}+T_{4}^2T_{12}-T_{8}^2
\end{array}
$
\\[1ex]
$
\left[\mbox{\tiny $
\begin{array}{rrrrrrrrrrrr}
-1 & -1 & -1 & 0 & 0 & 0 & -1 & 0 & 0 & 0 & 2 & 0 \\ 
1 & 1 & 1 & -1 & -1 & -1 & 0 & 0 & 0 & 0 & -2 & 2 
\end{array}
$}
\right].
$
\end{center}
After a suitable linear change of coordinates one sees that this ring is isomorphic to the Cox ring of symplectic, i.e. crepant, resolutions of considered representation of~$D_8$ computed in~\cite[Sect.~5]{DobuMa}.
\end{proposition}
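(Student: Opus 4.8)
The plan is to produce the presentation by running the brute-force variant of Algorithm~\ref{algo:resolve} (namely Algorithm~\ref{algo:bruteforce}) on the presentation $\Cox(X_0) = \KT{10}/I_0$ of Proposition~\ref{prop:4dim}, and then to identify the resulting $\ZZ^2$-graded ring with the one of~\cite[Sect.~5]{DobuMa} by an explicit graded linear change of variables. First I would carry out the toric steps: fix a Gale dual $P_0$ of the degree matrix $Q_0$, form the cone $\sigma = P_0(\QQQ^{10})$ and the fan $\Sigma_0 = \faces(\sigma)$, compute $\Trop(X_0)$ with a fan structure $\Upsilon$, and intersect to obtain $\Sigma' = \{\sigma \cap \tau : \tau \in \Upsilon\}$. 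Because a full stellar resolution of $\Sigma'$ produces far too many rays (leading to the unwieldy $\Cl(X) = \ZZ^{16}$ modification of the preceding remark), I would instead run the brute-force search of Algorithm~\ref{algo:bruteforce} for a minimal subset of candidate rays whose stellar subdivision both resolves $\Sigma'$ and passes the verification. Here exactly two new rays $v_1, v_2$ suffice, which is consistent with the output $\Cl(X) = \ZZ^2$ and $\Cox(X) = \KT{12}/I$, the two extra variables $T_{11}, T_{12}$ corresponding to $v_1, v_2$. Writing $P = [P_0, v_1, v_2]$ with Gale dual $Q$, I would then apply~\cite[Alg.~3.6]{HaKeLa}: pull back $I_0$ into the Laurent ring, saturate by $T_1 \cdots T_{12}$, and read off the displayed ideal $I$ and degree matrix $Q$.

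The verification that $X$ is genuinely a resolution is then largely mechanical. By the correctness of Algorithm~\ref{algo:resolve}, once the twelve variables $T_i$ are prime in $R = \KT{12}/I$ and $\dim(I) - \dim(\langle T_i, T_j \rangle + I) \geq 2$ for all $i \neq j$, the ring $R$ is the Cox ring of the modification $X \to X_0$; Lemma~\ref{lem:normal} supplies normality once $\spec(R) \cap (\CC^*)^{12}$ is smooth and the variables are prime. Smoothness of $X$ itself is certified by Algorithm~\ref{algo:issmooth}, which checks regularity of the fan of the canonical toric ambient variety and of each chart $X_\sigma$. With these checks successful, $X \to X_0$ is a resolution with the stated Cox ring.

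The substantive part is the final identification with~\cite[Sect.~5]{DobuMa}. The degree matrix organizes the variables into graded blocks: $T_1, T_2, T_3$ all have degree $(-1,1)$, the triple $T_4, T_5, T_6$ has degree $(0,-1)$, the triple $T_8, T_9, T_{10}$ sits in degree $(0,0)$, while $T_7, T_{11}, T_{12}$ carry the pairwise distinct degrees $(-1,0), (2,-2), (0,2)$. Consequently any $\ZZ^2$-graded linear substitution of $R$ has a very rigid shape: it mixes each of these three triples internally, rescales $T_7$, $T_{11}$ and $T_{12}$ individually, and may be post-composed with an automorphism of the grading lattice $\ZZ^2$ permuting the degrees. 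I would search this finite-dimensional family of block substitutions for one sending the twenty generators of $I$ to the defining relations of the Cox ring in~\cite{DobuMa}; matching the monomial supports degree by degree pins the substitution down up to a small ambiguity, which is then resolved by comparing coefficients.

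The main obstacle is precisely this last matching: one must exhibit a \emph{single} graded linear change of coordinates carrying the whole ideal $I$ onto the ideal of~\cite{DobuMa}, rather than merely checking that the two rings share a graded Hilbert function. Once this isomorphism is in hand, $X$ inherits the geometric interpretation of~\cite{DobuMa}: for the appropriate chamber of $\Mov(X)$ it is a symplectic -- hence, by~\cite[Thm~2.5]{Verbitsky_crepant}, crepant -- resolution of $\KK^4/D_8$. As an independent consistency check one notes that $\rank \Cl(X) = 2$ equals the number of exceptional divisors, matching the count of the symplectic McKay correspondence~\cite{KaledinMcKay}.
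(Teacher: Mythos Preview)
Your overall strategy---run a brute-force variant of the resolution algorithm to find a small set of new rays, then verify and match with~\cite{DobuMa}---is the same as the paper's, but there is one technical difference in how the candidate rays are located. You propose to apply Algorithm~\ref{algo:bruteforce} literally: resolve the full $\Sigma'=\{\sigma\}\cap\Upsilon$, collect the resulting rays into~$\mathcal V$, and search subsets. The paper instead modifies the search space: it first singles out the three minimal \emph{singular} cones $P_0(\gamma_0^*)$ coming from $\mathfrak F$-faces~$\gamma_0$, intersects each of these with the tropical fan, and resolves. The union of new rays arising this way consists of exactly the two vectors
\[
v_1=(2,2,1,0,0,0,1,0,0,0),\qquad v_2=(2,2,2,1,1,1,1,0,0,0),
\]
so no subset search is needed at all. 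Your version may well succeed too, provided $v_1,v_2$ happen to lie in the candidate set~$\mathcal V$ produced by resolving the full~$\Sigma'$; but the paper's $\mathfrak F$-face refinement is what guarantees that precisely these two rays appear, and it is the step you are missing.

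Your treatment of the identification with~\cite[Sect.~5]{DobuMa} is more detailed than the paper's (which simply asserts the existence of a graded linear change of coordinates), and your analysis of the block structure forced by the degree matrix is a reasonable way to organise that computation. The smoothness and primality verifications you outline are exactly what the paper does.
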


\begin{proof}
This is an application of Algorithm~\ref{algo:bruteforce}
with the following modifications (similar to~\cite[Sect.~3]{HaKe}):
 There are three minimal elements $\sigma_1,\sigma_2,\sigma_3$ of the set consisting of all singular $P_0(\gamma_0^*)$ such that $\gamma_0\text{ is an $\mathfrak F$-face}$.
 We form the cone-wise intersections
 $\Sigma_i = \{\sigma_i\}\cap \Upsilon$ and resolve the fans: $\Sigma_i'\to \Sigma_i$.
 Denoting by ${\rm primit}(\Xi^{(1)})$ the primitive generators of the rays of a fan or cone $\Xi$, 
 we have
 $$
 \bigcup_{i=1}^3{\rm primit}\left(\Sigma_i'^{(1)}\right)\setminus {\rm primit}\left(\sigma_i^{(1)}\right)
 \ =\ 
 \left\{\!\!
 \mbox{\footnotesize $
 \begin{array}{c}
 (2, 2, 1, 0, 0, 0, 1, 0, 0, 0),\\
 (2, 2, 2, 1, 1, 1, 1, 0, 0, 0)
  \end{array}
  $}\!\!
  \right\}
  \ =:\ \{v_1,v_2\}.
 $$
The remaining steps of the algorithm deliver the result.
All needed verifications, in particular the smoothness test, succeed. 
\end{proof}

\begin{proposition}\label{prop:4dimS3}
In the case of $G=S_3$ in Proposition~\ref{prop:4dim} we obtain a resolution $X\to X_0$ with $\Cl(X)=\ZZ$ and the Cox ring is $\Cox(X) = \KT{13}/I$ where generators for $I$ and the degree matrix are 

\begin{center}
\allowdisplaybreaks
\footnotesize
$
\begin{array}{rl}
    3T_{9}T_{10}-T_{7}T_{11}+T_{6}T_{12}, &
    3T_{5}T_{10}-2T_{4}T_{11}+T_{3}T_{12},\\
    3T_{6}T_{8}+T_{9}T_{11}-T_{7}T_{12}, &
    3T_{4}T_{8}-T_{2}T_{11}-2T_{5}T_{12},\\
    3T_{3}T_{8}-3T_{2}T_{10}-T_{5}T_{11}-T_{4}T_{12}, &
    T_{5}T_{7}-2T_{4}T_{9}+T_{1}T_{12},\\
    T_{4}T_{7}-2T_{3}T_{9}+T_{1}T_{11}, &
    T_{2}T_{7}-3T_{1}T_{8}+2T_{5}T_{9},\\
    T_{5}T_{6}-T_{3}T_{9}+T_{1}T_{11}, &
    2T_{4}T_{6}-T_{3}T_{7}+3T_{1}T_{10},\\
    T_{2}T_{6}+T_{4}T_{9}-T_{1}T_{12}, &
    T_{1}T_{5}T_{13}+6T_{9}T_{11}-3T_{7}T_{12},\\
    T_{4}^2T_{13}-T_{3}T_{5}T_{13}-3T_{11}^2+9T_{10}T_{12}, & 
    T_{2}T_{4}T_{13}+T_{5}^2T_{13}+9T_{8}T_{11}-3T_{12}^2,\\
    T_{1}T_{4}T_{13}+3T_{7}T_{11}-6T_{6}T_{12}, &
    T_{2}T_{3}T_{13}+T_{4}T_{5}T_{13}+27T_{8}T_{10}-3T_{11}T_{12},\\
    T_{1}T_{3}T_{13}+9T_{7}T_{10}-6T_{6}T_{11}, &
    T_{1}T_{2}T_{13}+9T_{7}T_{8}-6T_{9}T_{12},\\
    T_{1}^2T_{13}+3T_{7}^2-12T_{6}T_{9}, &
    4T_{9}^3-T_{2}^2T_{13}-27T_{8}^2,\\
    2T_{7}T_{9}^2+T_{2}T_{5}T_{13}-9T_{8}T_{12}, &
    4T_{6}T_{9}^2-T_{5}^2T_{13}-3T_{12}^2,\\
    2T_{1}T_{9}^2-9T_{5}T_{8}-3T_{2}T_{12}, &
    T_{7}^2T_{9}-4T_{6}T_{9}^2-9T_{8}T_{11}+3T_{12}^2,\\
    2T_{6}T_{7}T_{9}-T_{4}T_{5}T_{13}-3T_{11}T_{12}, &
    T_{1}T_{7}T_{9}-3T_{2}T_{11}-3T_{5}T_{12},\\
    4T_{6}^2T_{9}-T_{3}T_{5}T_{13}-6T_{11}^2+9T_{10}T_{12}, &
    2T_{1}T_{6}T_{9}+3T_{5}T_{11}-3T_{4}T_{12},\\
    T_{1}^2T_{9}-3T_{2}T_{4}-3T_{5}^2, &
    T_{7}^3-4T_{6}T_{7}T_{9}-27T_{8}T_{10}+3T_{11}T_{12},\\
    T_{6}T_{7}^2-4T_{6}^2T_{9}+3T_{11}^2-9T_{10}T_{12}, &
    T_{1}T_{7}^2-4T_{1}T_{6}T_{9}-18T_{3}T_{8}+9T_{2}T_{10}+9T_{4}T_{12},\\
    2T_{6}^2T_{7}-T_{3}T_{4}T_{13}-9T_{10}T_{11}, &
    T_{1}T_{6}T_{7}+3T_{4}T_{11}-3T_{3}T_{12},\\
    T_{1}^2T_{7}-3T_{2}T_{3}-3T_{4}T_{5}, &
    4T_{6}^3-T_{3}^2T_{13}-27T_{10}^2,\\
    2T_{1}T_{6}^2+9T_{4}T_{10}-3T_{3}T_{11}, &
    T_{1}^2T_{6}-3T_{4}^2+3T_{3}T_{5}
\end{array}
$
\\[1ex]
$
\left[\mbox{\tiny $
\begin{array}{rrrrrrrrrrrrr}
 -1 & -1 & -1 & -1 & -1 & 0 & 0 & 0 & 0 & 0 & 0 & 0 & 2
\end{array}
$}
\right].
$
\end{center}
After a suitable linear change of coordinates one sees that this ring is isomorphic to the Cox ring of symplectic resolution of considered representation of~$S_3$ computed in~\cite[Sect.~4]{DobuMa}.
\end{proposition}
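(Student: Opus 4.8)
The plan is to follow the computational strategy already used in the proof of Proposition~\ref{prop:4dim1}, namely to run the brute-force variant Algorithm~\ref{algo:bruteforce} on the presentation $\Cox(X_0)=\KT{12}/I$ obtained for $S_3$ in Proposition~\ref{prop:4dim}, and then to match the output against the presentation of~\cite[Sect.~4]{DobuMa} by an explicit linear coordinate change. First I would compute a Gale dual $P_0$ of the degree matrix $Q_0$, form the full-dimensional cone $\sigma=P_0(\QQQ^{12})$ in $\QQ^{12}$ (here $Z_0=\CC^{12}/G'$ with $G'=S_3/[S_3,S_3]\cong\ZZ/2\ZZ$), and a fan $\Upsilon$ supported on $\Trop(X_0)$, exactly as in the first steps of Algorithm~\ref{algo:resolve}. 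Since a direct application of Algorithm~\ref{algo:resolve} is computationally infeasible here, just as noted for the $D_8$ case, I would localize the search as in Proposition~\ref{prop:4dim1}: instead of resolving the whole intersection fan, I determine the minimal singular cones $P_0(\gamma_0^*)$ over the $\mathfrak{F}$-faces $\gamma_0$, intersect each cone-wise with $\Upsilon$, resolve these smaller fans, and collect the primitive generators of the newly inserted rays as the candidate set fed into the brute-force loop.

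The core of the computation then proceeds as in Algorithm~\ref{algo:resolve} from the line where the enlarged matrix $P=[P_0,v_1,\ldots,v_m]$ is formed. Because the stated degree matrix is a single row, we have $\Cl(X)=\ZZ$, so that $r=13$ and effectively only one ray is inserted; I would compute the Gale dual $Q$ of $P$, pull back $I_0$ along $\overline\pi$, saturate by the product of all variables, and invoke~\cite[Alg.~3.6]{HaKeLa} with the verification option. The verification requires checking that each $T_i$ defines a prime element of $R=\KT{13}/I$ and that $\dim(I)-\dim(\langle T_i,T_j\rangle+I)\geq 2$ for all $i\neq j$; together with smoothness of $\spec(R)\cap(\CC^*)^{13}$ this gives normality of $R$ by Lemma~\ref{lem:normal} and hence, via~\cite[Alg.~3.6]{HaKeLa}, that $R$ is the Cox ring of the modification $X\to X_0$. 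Smoothness of $X$ itself is then certified with Algorithm~\ref{algo:issmooth}, i.e.\ by checking regularity of the fan $\Sigma$ of the canonical toric ambient variety and smoothness of each chart $X_\sigma$.

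It remains to identify $R$ with the Cox ring of the symplectic resolution of~\cite[Sect.~4]{DobuMa}. For this I would exhibit an explicit invertible linear change of the variables $T_1,\ldots,T_{13}$, compatible with the $\ZZ$-gradings up to sign and permutation, carrying the listed generators of $I$ to the generators of the ideal in~\cite[Sect.~4]{DobuMa}. Since for a symplectic quotient singularity a crepant resolution and a symplectic resolution coincide by~\cite[Thm~2.5]{Verbitsky_crepant}, and both rings are Cox rings of resolutions of the same $X_0$, matching the graded ideals after this coordinate change suffices to conclude the asserted isomorphism.

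I expect the main obstacle to be computational rather than conceptual. The $S_3$ example is a codimension-eight subvariety of a $12$-dimensional toric ambient space with around forty defining relations, so the saturation and primality Gr\"obner-basis computations, and above all the chart-by-chart smoothness test of Algorithm~\ref{algo:issmooth}, form the real bottleneck; the whole purpose of the brute-force modification is to keep the number of inserted rays minimal so that these verifications stay within reach of the \texttt{Singular} implementation. A secondary difficulty is producing the explicit coordinate change matching~\cite[Sect.~4]{DobuMa}, which requires first normalizing both presentations to a common shape before comparing the graded generators term by term.
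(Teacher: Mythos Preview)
Your overall plan follows the paper's spirit but differs from it in one crucial logical point, and that difference is exactly where your proposal runs into trouble.

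The paper's proof does apply Algorithm~\ref{algo:bruteforce} and, as you correctly deduce from $\Cl(X)=\ZZ$, inserts a single new ray. However, the paper does \emph{not} carry out the smoothness verification of Algorithm~\ref{algo:issmooth}. In fact, the remark preceding Proposition~\ref{prop:4dim1} states explicitly that for $G=S_3$ the smoothness tests were computationally infeasible. So your proposed chart-by-chart smoothness check, which you yourself identify as the main bottleneck, cannot be completed; this step of your plan would simply not terminate in practice.

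The paper circumvents this by reversing the order of your last two steps. Instead of first verifying smoothness and then matching the result with~\cite[Sect.~4]{DobuMa}, the paper first establishes the isomorphism with the ring of~\cite{DobuMa} and then \emph{imports} smoothness (indeed, the full statement that the ring is the Cox ring of the symplectic resolution) from~\cite[Prop.~4.4]{DobuMa}. Concretely, \cite{DobuMa} describes the Cox ring via an explicit generating set; the paper computes the ideal of relations among those generators and performs the coordinate change corresponding to passing between the two representations of $S_3$ in play. Once the graded rings agree, no independent smoothness test is needed. Your proposal treats the comparison with~\cite{DobuMa} as a confirmatory afterthought rather than as the actual mechanism that replaces the infeasible verification; reordering your argument in this way fixes the gap.
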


\begin{proof}
This is an application of Algorithm~\ref{algo:bruteforce}; one new ray is added to the fan of the toric ambient variety corresponding to the Cox ring of~$X_0$. To show that the ring is isomorphic to the one given by a generating set in~\cite[Sect.~4]{DobuMa} it is sufficient to compute the relations between these generators and perform the coordinate change to pass from one representation of~$S_3$ to the other. Having this, the smoothness tests are not needed: by~\cite[Prop.~4.4]{DobuMa} we indeed obtain the Cox ring of the symplectic resolution of~$X_0$.
\end{proof}

\bibliographystyle{abbrv}
\bibliography{resolve}

\end{document}